\newtheorem{THM}{Theorem}[section]
\newtheorem*{THMMAIN}{Theorem~\ref{thm:main-signed}}
\newtheorem{LEM}[THM]{Lemma}
\newtheorem{PROP}[THM]{Proposition}
\theoremstyle{remark}
\newcommand\graph{\mathsf{graph}}
\newcommand\abs[1]{\lvert #1\rvert}
\begin{document}
\title{Even-cycle decompositions of graphs with no odd-$K_4$-minor}
\author{Tony Huynh}
\author{Sang-il Oum}
\author{Maryam Verdian-Rizi}
\address[Huynh]{Department of Mathematics,
  Universit\'e Libre de Bruxelles, Brussels, Belgium}
\address[Oum]{Department of Mathematical Sciences, KAIST, Daejeon, South Korea and School of Mathematics, KIAS, Seoul, South Korea}
\address[Verdian-Rizi]{Department of Mathematical Sciences, KAIST, Daejeon, South Korea}
 \email{tony.bourbaki@gmail.com}
\email{sangil@kaist.edu}
\email{mverdian@gmail.com}
\thanks{Supported by Basic Science Research
  Program through the National Research Foundation of Korea (NRF)
  funded by the Ministry of Education, Science and Technology
  (2012-0004119). S. O. is also supported by TJ Park Junior Faculty Fellowship.}
\thanks{T.~H., S.~O., and M.~V.-R. are supported by Basic Science Research
  Program through the National Research Foundation of Korea (NRF)
  funded by the Ministry of Science, ICT \& Future Planning
  (2011-0011653).  T.~H. was also supported by the NWO (The Netherlands Organization for Scientific Research) free
competition project ``Matroid Structure - for Efficiency'' led by Bert Gerards}
\date{\today}

\begin{abstract}
An \emph{even-cycle decomposition} of a graph $G$ is a partition of
$E(G)$ into cycles of even length.  Evidently, every Eulerian bipartite graph has an even-cycle decomposition.  Seymour (1981) proved that
every $2$-connected loopless Eulerian planar graph with an even number
of edges also admits an even-cycle decomposition.  Later, Zhang (1994) generalized this to graphs
with no $K_5$-minor.

Our main theorem gives sufficient conditions for the existence of even-cycle decompositions of  graphs in the absence of \emph{odd minors}. Namely, we prove that every $2$-connected loopless
Eulerian odd-$K_4$-minor-free graph with an even number of edges has an even-cycle decomposition.

This is best possible in the sense that `odd-$K_4$-minor-free' cannot be replaced with 
`odd-$K_5$-minor-free.'  The main technical ingredient is a structural characterization of the class of 
odd-$K_4$-minor-free graphs, which is due to Lov\'{a}sz, Seymour, Schrijver, and Truemper. 
\end{abstract}

\keywords{cycle, even-cycle decomposition, Eulerian graph, odd minor}

\maketitle

\section{Introduction}\label{sec:intro}
A graph $G$ is \emph{even-cycle decomposable} if its edge set can be partitioned into even cycles.  Note that if $G$ is even-cycle decomposable, then necessarily $G$ is Eulerian, loopless, and $\abs{E(G)}$ is even.  For bipartite graphs, these conditions are also sufficient, since every cycle is even.

\begin{PROP}[Euler] \label{trivial}
  Every Eulerian bipartite graph is even-cycle decomposable.
\end{PROP}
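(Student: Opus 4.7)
The plan is to combine two classical facts: (i) every Eulerian graph admits a partition of its edge set into cycles (Veblen's theorem), and (ii) every cycle in a bipartite graph has even length. Together these immediately yield the proposition.

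To establish (i), I would induct on $\abs{E(G)}$. If $E(G)=\emptyset$ there is nothing to do. Otherwise, consider the subgraph $H$ of $G$ induced by the vertices of positive degree. Since $G$ is Eulerian, every vertex of $H$ has even degree, hence degree at least~$2$. A standard argument (take any maximal path; its last vertex must revisit a previous vertex, producing a cycle) yields a cycle $C \subseteq H$. The graph $G - E(C)$ again has all degrees even and has strictly fewer edges, so by induction its edge set partitions into cycles; adjoining $C$ gives the required decomposition of $G$.

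For (ii), recall that a graph is bipartite if and only if it contains no odd cycle. Hence every cycle produced in~(i) is automatically of even length, and we obtain an even cycle decomposition of $G$.

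There is no genuine obstacle here; the only point worth noting is the convention for the word \emph{cycle}. Throughout the paper cycles are $2$-regular connected subgraphs (in particular loops are forbidden by the standing assumption), so the inductive step above is valid as stated. Since this proposition serves only as a warm-up motivating the authors' main conjecture, I would present it in a single short paragraph citing Veblen's decomposition theorem and the bipartite characterization.
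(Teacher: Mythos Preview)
Your argument is correct and is exactly the standard one. Note, however, that the paper does not actually supply a proof of this proposition: it is stated as a folklore fact (attributed to Euler) and followed only by the remark that Eulerian bipartite graphs automatically have an even number of edges and no loops. So there is nothing to compare against; your Veblen-plus-bipartiteness paragraph is the intended justification and would be entirely appropriate here.
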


One motivation to study even-cycle decompositions is their connection to the four colour theorem~\cite{AH89, RSST96}.  For example, as noted by Seymour~\cite{planar}, one consequence of the four colour theorem is that  every $2$-connected cubic planar graph has a set of \emph{even} cycles in which each edge occurs exactly twice.  In the same paper, Seymour also proves that planar graphs (which satisfy the obvious necessary conditions) are always even-cycle decomposable.

\begin{THM}[Seymour~\cite{planar}]\label{seymour}
  Every $2$-connected Eulerian loopless planar graph with an even number of edges is even-cycle decomposable.
\end{THM}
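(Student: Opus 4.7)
The plan is to induct on $\abs{E(G)}$, with the base case being when $G$ is itself a single even cycle. For the inductive step, I embed $G$ in the plane and exploit the fact that, since $G$ is Eulerian, its planar dual $G^*$ is bipartite; equivalently, the faces of $G$ admit a proper 2-coloring into black and white classes. Because $G$ is 2-connected, each face boundary is an honest cycle (not merely a closed walk).

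The strategy is to extract a single even cycle $C$ from $G$ so that induction can be applied to what remains. The first attempt is to set $C=\partial F$ for some face $F$ of even length. If every face has odd length, a parity count on $\sum_F \abs{\partial F} = 2\abs{E(G)}$ shows that the number of odd faces is even. Choosing two adjacent odd faces $F_1,F_2$, the symmetric difference $\partial F_1\triangle \partial F_2$ has even cardinality (a sum of two odd face lengths minus twice the number of shared edges), and, since $G$ is 2-connected and the two faces share a boundary path, can be arranged to form a single even cycle $C$.

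Setting $G':=G\setminus E(C)$, removing a cycle preserves evenness of degrees, $\abs{E(G')}$ remains even, and $G'$ is still planar and loopless. One then decomposes $G'$ into 2-connected blocks and applies the induction hypothesis to each block, combining the resulting decompositions with $C$ itself to form an even cycle decomposition of $G$.

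The main obstacle is the parity condition at the block level: although $\abs{E(G')}$ is automatically even overall, individual 2-connected blocks of $G'$ may have odd numbers of edges, in which case the induction hypothesis does not apply to them. Overcoming this will require a more delicate choice of $C$, one that interacts with the block structure of $G\setminus E(C)$ in a controlled way. I expect the cleanest route is a minimum-counterexample argument: assume $G$ has minimum edge count among counterexamples, and use 2-connectedness together with the planar face structure to construct an even cycle $C$ whose removal yields 2-connected blocks each of even size, contradicting minimality. Showing that such a $C$ exists in every configuration of blocks and faces — in particular when all faces are odd and $G$ has many separating pairs of vertices — is where I expect the substantive combinatorial work of the proof to lie.
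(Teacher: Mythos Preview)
The paper does not prove this theorem; it is quoted as a result of Seymour and used only as context. The paper's own main result (Theorem~\ref{oddk4free}) does not imply it either, since $K_4$ is planar and $(K_4,E(K_4))$ is itself an odd-$K_4$. The closest thing the paper actually proves is Proposition~\ref{oddplanar}, the very special case of a $2$-connected Eulerian loopless planar signed graph with exactly two odd faces: there the bipartition $(A,B)$ of the dual places both odd faces on one side, and the faces on the other side already form an even cycle decomposition. That trick does not extend when more than two faces are odd, so there is simply no proof in the paper to compare your attempt against.

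As for your outline on its own merits: it is a plan, not a proof, and you have correctly identified the genuine obstruction. After deleting an even cycle $C$, the $2$-connected blocks of $G\setminus E(C)$ need not individually have an even number of edges, so the induction hypothesis cannot be invoked blockwise. This is not a technicality to be patched later; it is the whole difficulty, and Seymour's actual argument does not proceed by peeling off a single even cycle at a time. There is also a smaller gap earlier in your sketch: in a $2$-connected (but not $3$-connected) plane graph two faces can share several disjoint boundary arcs, so $\partial F_1\triangle\partial F_2$ may split into several cycles rather than one, and those pieces need not each be even. As written, the proposal is a reasonable first heuristic, but the ``substantive combinatorial work'' you postpone to the last paragraph is essentially the entire content of the theorem.
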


Note that the $2$-connected condition is with little loss of generality, since a graph $G$ is even-cycle decomposable if and only if each block of $G$ is even-cycle decomposable.

Later, Zhang~\cite{nok5} generalized Theorem~\ref{seymour} to graphs with no $K_5$-minor.

\begin{THM}[Zhang~\cite{nok5}]\label{zhang}
Every $2$-connected Eulerian loopless $K_5$-minor-free graph with an even number of edges is even-cycle decomposable.
\end{THM}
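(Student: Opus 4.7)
The plan is to induct on $\abs{V(G)}+\abs{E(G)}$, exploiting Wagner's structure theorem for $K_5$-minor-free graphs: every such graph can be built from planar graphs and copies of the Wagner graph $V_8$ by repeated clique-sums of order at most $3$. So in each inductive step I either reduce along a small vertex cut of order $2$ or $3$, or, if $G$ is already $3$-connected, fall into the planar or $V_8$ base case.

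For the reduction step, suppose $G$ has a vertex cut $S$ of size $2$ or $3$ that splits $G$ into $G_1,G_2$ with $V(G_1)\cap V(G_2)=S$. I would form auxiliary graphs $G_i'$ by adding a small gadget on $S$ (a single edge when $\abs{S}=2$, suitably chosen edges of a triangle when $\abs{S}=3$), choosing the gadgets so that each $G_i'$ is loopless, $2$-connected, Eulerian, $K_5$-minor-free, and has an even number of edges. A parity argument --- using that each $u\in S$ has even degree in $G$, and that the sum of degrees of vertices of $S$ in $G_i$ has a fixed parity --- shows that a consistent choice is available. By induction, each $G_i'$ has an even cycle decomposition $\mathcal{C}_i$; the cycles in $\mathcal{C}_1$ and $\mathcal{C}_2$ that use the virtual gadget edges can then be spliced across $S$ to recover an even cycle decomposition of $G$. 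In the $\abs{S}=2$ case the splicing is painless, since concatenating two even cycles at a shared virtual edge produces another even cycle.

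Once $G$ is $3$-connected, Wagner's theorem forces $G$ to be either planar or isomorphic to $V_8$. The planar case is exactly Theorem~\ref{seymour}. The $V_8$ case, having only $12$ edges and many symmetries, can be handled by a direct finite case check: one verifies by hand that every $2$-connected Eulerian spanning subgraph of $V_8$ with an even number of edges admits an even cycle decomposition.

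The main obstacle is the $3$-sum reduction. Splitting an Eulerian graph across a triangle-shaped cut routinely yields pieces that fail to be Eulerian or that have odd edge count, so the choice of which virtual edges to attach in each $G_i'$ is delicate, and it must be coordinated with the parities of the even cycles of $\mathcal{C}_i$ that traverse those edges, so that the glued cycles remain of even length. Simultaneously maintaining all the invariants --- loopless, $2$-connected, Eulerian, $K_5$-minor-free, even $\abs{E}$, and evenness after splicing --- is where the real work sits.
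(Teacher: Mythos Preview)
The paper does not give its own proof of Theorem~\ref{zhang}; it is quoted as a known result of Zhang~\cite{nok5}, and the paper's new contribution is the signed-graph Theorem~\ref{oddk4free}. So there is nothing in the paper to compare your argument against directly. That said, your outline is essentially the route Zhang takes: Wagner's structure theorem plus induction across small cuts, with Seymour's planar theorem as the base case.

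Two remarks on the outline itself. First, the $V_8$ case is vacuous: $V_8$ is $3$-regular, hence never Eulerian, so once you reach a $3$-connected $K_5$-minor-free Eulerian graph it is automatically planar and Theorem~\ref{seymour} applies. Your ``finite case check over $2$-connected Eulerian spanning subgraphs of $V_8$'' is therefore not needed (and is a bit of a non sequitur: if $G\cong V_8$ you are looking at $G$, not at subgraphs of it).

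Second, and more importantly, the $3$-cut step is not a gap you can wave at. You correctly flag it as ``where the real work sits,'' but as written the proposal does not contain that work. Concretely: when $S=\{x,y,z\}$ and the degrees $\deg_{G_1}(x),\deg_{G_1}(y),\deg_{G_1}(z)$ are all even, adding the triangle $xyz$ to $G_1$ makes all three odd, so $G_1'$ is not Eulerian; adding only two of the three edges fixes the parity of two vertices but leaves the third odd. In the genuinely problematic parity patterns you cannot make both $G_1'$ and $G_2'$ simultaneously Eulerian with gadgets supported on $S$ alone, and even when you can, you must then argue that the cycles in $\mathcal{C}_1$ and $\mathcal{C}_2$ through the virtual edges can be spliced so that the resulting cycles are still even --- this is a genuine case analysis, not a one-line parity count. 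Zhang's paper handles exactly this, and until you supply that analysis your proposal is a plan rather than a proof.
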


In this paper we consider sufficient conditions for the existence of even-cycle decompositions in graphs with no \emph{odd-$K_t$-minor} (definitions are deferred until the next section).
For further information on even-cycle decomposable graphs and related results we refer the reader to the surveys of Jackson~\cite{jackson} or Fleischner~\cite{fleischner}.

Our main result is the following.

\begin{THM} \label{oddk4free}
Every $2$-connected Eulerian loopless odd-$K_4$-minor-free graph with an even number of edges is even-cycle decomposable.
\end{THM}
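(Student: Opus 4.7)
The plan is to prove Theorem~\ref{oddk4free} by induction on $\abs{E(G)}$. In the signed-graph setting, an \emph{even cycle} should be understood as a balanced cycle (one traversing an even number of odd edges); this generalizes the unsigned notion once a graph is regarded as a signed graph with all edges odd, and then ``even number of odd edges'' is precisely the natural parity condition. If $G$ is balanced, then after switching $G$ has no odd edges, every cycle is trivially balanced, and any Eulerian cycle decomposition works. So I may assume $G$ is unbalanced.

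The main tool would be a structure theorem for odd-$K_4$-minor free signed graphs, due to Gerards and collaborators, which asserts that a $2$-connected unbalanced signed graph with no odd-$K_4$ minor either contains a \emph{blocking vertex} $v$ (meaning $G - v$ is balanced), or admits a $2$-separation across which a $2$-sum reduction is possible.

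In the blocking-vertex case, I would switch $G$ so that every odd edge is incident to $v$. Since $G$ is Eulerian and has an even number of odd edges, the odd edges at $v$ can be paired, and by choosing a suitable Euler tour starting at $v$ I would arrange that consecutive returns to $v$ cut off closed subwalks each containing an even number of odd edges; these subwalks then decompose further into balanced cycles because $G - v$ is balanced. In the $2$-sum case, if $G = G_1 \oplus_2 G_2$ along a marker edge $e$, I would assign a sign to $e$ so that each $G_i + e$ inherits every hypothesis (in particular, the even count of odd edges splits appropriately), apply induction to get even cycle decompositions of each $G_i + e$, and glue the unique cycle through $e$ on each side to form a balanced cycle of $G$.

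The principal obstacle will be the $2$-sum step: one must confirm that the signature on $e$ can always be chosen so that both summands are $2$-connected, Eulerian, loopless, odd-$K_4$-minor free, and have an even number of odd edges simultaneously, and one must handle degenerate sub-cases (such as when a side collapses to a pair of parallel edges) as explicit base cases of the induction. A secondary subtlety in the blocking-vertex case is that pairing the odd edges at $v$ requires propagating the global evenness of the odd-edge count through the chosen Euler tour, which may force a sequence of local adjustments to the pairings before the decomposition into balanced cycles can be read off.
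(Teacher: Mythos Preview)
Your plan has two genuine gaps.

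\textbf{The structure theorem you invoke is incomplete.} It is not true that every $2$-connected unbalanced odd-$K_4$-minor-free signed graph either has a blocking vertex or a useful $2$-separation. The actual decomposition theorem (Theorem~\ref{structure} in the paper, due to Lov\'asz, Seymour, Schrijver, and Truemper) has two further outcomes that you omit: the graph may be \emph{planar with at most two odd faces}, or it may have a proper $3$-separation $(G_1,G_2)$ with the $G_2$-side bipartite and connected. A $3$-connected planar signed graph with two far-apart odd faces need not have any blocking vertex, so this case cannot be absorbed into your dichotomy. Both extra outcomes require separate arguments (Proposition~\ref{oddplanar} and Lemma~\ref{3-sep}), and the $3$-separation case in particular is delicate.

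\textbf{The $2$-sum step does not work as stated for even $2$-separations.} If $(G_1,G_2)$ is a $2$-separation with boundary $\{u,v\}$ and $\deg_{G_1}(u),\deg_{G_1}(v)$ are both even, then adding a single marker edge $e=uv$ to $G_1$ makes both $u$ and $v$ odd-degree, so $G_1+e$ is not Eulerian and induction cannot be applied. The paper handles this by adding \emph{two} parallel edges (one even and one odd, or two even, depending on parities), but then one must ensure the resulting smaller graph is still a minor of $(G,\Sigma)$ and that the added $2$-cycle is not itself used in the decomposition. This forces a ``necklace decomposition'' (Lemma~\ref{bead}) and a residual ``Bermuda triangle'' obstruction that occupies much of Section~\ref{sec:proof}. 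Your proposal, which adds only one marker edge, cannot navigate this case.

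A smaller point: in the odd $2$-separation case you also need $G_i+e$ to be a genuine minor of $(G,\Sigma)$ (so that it remains odd-$K_4$-free), which requires both an even and an odd $u$--$v$ path on the opposite side; when one side is bipartite this fails for one parity and must be argued separately, as in Lemma~\ref{odd2sep}.
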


Theorem~\ref{oddk4free} is best possible in the sense that `odd-$K_4$-minor-free' cannot be replaced with 
`odd-$K_5$-minor-free.'

\begin{THM} \label{counterexample}
There exists a $2$-connected Eulerian loopless odd-$K_5$-minor-free graph with an even number of edges which is not even-cycle decomposable.
\end{THM}

In a previous version of this paper, we conjectured that all $2$-connected Eulerian loopless odd-$K_5$-minor-free graphs with an even number of edges are even-cycle decomposable.
Note that if true, this conjecture would simultaneously imply both Proposition~\ref{trivial} and Theorem~\ref{zhang}.  By a celebrated theorem of Guenin~\cite{Guenin01}, a signed graph is odd-$K_5$-minor-free if and only if it is weakly bipartite (as defined by Gr{\"o}tschel and Pulleyblank~\cite{GP81}).  See also Naserasr, Rollov\'a, and Sopena~\cite{NRS13} for connections between  odd-$K_5$-minor-free signed graphs and the $4$-colour theorem.

However, Cheolwon Heo disproved our conjecture.  We would like to thank him for graciously allowing us to include the proof of Theorem~\ref{counterexample} in this paper.  The (graph version of the) example used in the proof of Theorem~\ref{counterexample} also appears in~\cite{planar}, where it is used as a counterexample to the claim that all $2$-connected Eulerian loopless graphs with an even number of edges is even-cycle decomposable.  Note that $K_5$ is another counterexample to this claim.

For the proof of Thereom~\ref{oddk4free}, a potentially useful inductive tool is the following nice theorem of Conlon~\cite{conlon}.

\begin{THM}[Conlon~\cite{conlon}] \label{Conlon}
  Let $G$ be a simple $3$-connected graph of minimum degree $4$.
  If $G$ is not isomorphic to $K_5$,
  then $G$ contains an even cycle $C$ such that $G \setminus E(C)$ is $2$-connected.
\end{THM}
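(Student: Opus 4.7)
The plan is to induct on $\abs{E(G)}$, combining three ingredients: a reduction to the $3$-connected case via $2$-sums, the easy balanced case handled by Proposition~\ref{trivial}, and a structural analysis of $3$-connected odd-$K_4$-minor-free signed graphs due to Gerards. Throughout, I would freely use \emph{switching} (replacing the odd-edge set $\Sigma$ by $\Sigma \triangle \delta(U)$), which preserves the class of even signed cycles and the odd-$K_4$-minor-free property, so the entire theorem is invariant under the switching class.

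For the balanced base case, if $(G, \Sigma)$ is switching-equivalent to a signed graph with $\Sigma = \emptyset$, then every cycle of $G$ is an even signed cycle; since $G$ is loopless and Eulerian, a standard greedy argument decomposes its edge set into cycles, finishing the case. For the reduction to $3$-connectivity, suppose $G$ has a $2$-vertex-cut $\{u,v\}$ splitting $G$ into $G_1, G_2$. I would form $G_i^+$ by adding a virtual edge $uv$ to each piece, choosing its sign so that each $G_i^+$ has an even number of odd edges. Both $G_i^+$ remain Eulerian, loopless, and odd-$K_4$-minor-free (any such minor would survive in the larger $G$), so induction produces even signed cycle decompositions of $G_1^+$ and $G_2^+$; the cycles through the two virtual edges splice together to yield one for $G$.

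The main step is the $3$-connected, non-balanced case. Here I would invoke the structural result that every $3$-connected odd-$K_4$-minor-free signed graph that is not balanced contains a \emph{blocking vertex} $v$ such that $G \setminus v$ is balanced. After switching, I may assume $\Sigma \subseteq \delta(v)$. The Eulerian hypothesis forces $\deg(v)$ to be even, and the assumption that $\abs{\Sigma}$ is even says the number of odd edges at $v$ is even. I would pair up these odd edges arbitrarily; for each pair $\{e,e'\}$, the $2$-connectivity of $G \setminus v$ together with its balance supplies an even signed $vv$-trail through $e$ and $e'$, which promotes to an even signed cycle containing exactly $e$ and $e'$. Removing this cycle decreases the number of odd edges by two and preserves Eulerianness; after finitely many such removals all odd edges are gone, and the remaining Eulerian graph is balanced, so its cycles are automatically even signed cycles.

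The main obstacle is keeping the induction clean: deleting an even signed cycle may drop $2$-connectivity, so one must pass to blocks and track the parity of odd edges in each block; this is where the rigid structure around the blocking vertex becomes essential to guarantee that the smaller pieces are again odd-$K_4$-minor-free and have an even number of odd edges. A secondary delicate point is choosing the cycles in the blocking-vertex case so that $3$-connectedness is respected; alternatively, one may need to enumerate the sporadic $3$-connected extremal cases allowed by Gerards' theorem and handle each by hand.
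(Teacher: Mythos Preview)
Your proposal does not address the stated theorem at all. The statement you were asked to prove is Conlon's theorem: for a simple $3$-connected graph $G$ of minimum degree $4$ not isomorphic to $K_5$, there exists an even cycle $C$ with $G\setminus E(C)$ $2$-connected. This is a purely graph-theoretic statement about unsigned graphs, and the paper does not prove it---it is quoted from \cite{conlon} and then explicitly set aside as too strong to be useful for the paper's purposes. There is nothing in the paper to compare your argument against.

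What you have written instead is an outline for Theorem~\ref{oddk4free}, the paper's main result on odd-$K_4$-minor-free signed graphs. Even read as an attempt at that theorem, the outline has a substantive gap: you assert that in the $3$-connected non-balanced case the structure theorem guarantees a blocking vertex, but Theorem~\ref{structure} does not say this. The $3$-connected basic pieces can also be planar with two odd faces (with no blocking vertex), or isomorphic to $\tilde{K_3^2}$, or admit a bipartite $3$-separation as in clause~(iv). Your sketch handles only the almost-bipartite case (essentially Proposition~\ref{almostbipartite}) and says nothing about the planar-with-two-odd-faces case, which in the paper occupies Proposition~\ref{oddplanar} and a long portion of the final proof. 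Your $2$-sum reduction is also too optimistic: the pieces $G_i^+$ need not have fewer edges than $G$ when $G_i$ is small, and the paper's Lemmas~\ref{odd2sep}--\ref{bermudaconnected} show that handling the $\tilde{K_2^2}$ and Bermuda-triangle degeneracies is where most of the work lies.
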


Unfortunately the assumptions in Theorem~\ref{Conlon} are much too strong for our purposes.  Instead, our result relies on a structural description of the signed graphs with no odd-$K_4$-minor.  According to Gerards~\cite{gerards}, this structure theorem is due to Lov\'{a}sz, Seymour, Schrijver, and Truemper.  The proof of the structure theorem makes use of the regular matroid decomposition theorem of Seymour~\cite{regular}.  It is also a special instance of a decomposition theorem for binary matroids with no $F_7$-minor using a fixed element due to Truemper and Tseng~\cite{TT1986,nofano} ($F_7$ denotes the Fano matroid).

Our strategy for proving Theorem~\ref{oddk4free} is to first prove it for the two basic classes given by the structure theorem.  We then show how to combine even-cycle decompositions across low order separations.  This last step contains some technical difficulties.  Indeed, we end up having to apply the structure theorem \emph{twice} (once on the original graph and then also on an auxiliary graph).  

The graphs we consider are not necessary simple, and we do not know an easy proof of our theorem for simple graphs.  For example, the decomposition step will introduce parallel edges even if the original graph is simple. Our proof would be slightly shorter if we used Theorem~\ref{seymour}, but we do a bit of extra work so that we avoid using Theorem~\ref{seymour}.

The rest of the paper is organized as follows.  In
Section~\ref{sec:sign}, we define signed graphs and prove Theorem~\ref{counterexample}.  In
Section~\ref{sec:structure}, we present the structure theorem for
signed graphs with no odd-$K_4$-minor.  In Section~\ref{sec:bipartite}
and Section~\ref{sec:planar} we prove our main theorem for the two basic 
classes of signed graphs appearing in the structure theorem.  Finally, in Section~\ref{sec:proof}, we prove our main theorem.

\section{Signed graphs, re-signing, and odd minors}\label{sec:sign}
A \emph{signed graph} is a pair $(G, \Sigma)$ consisting of a graph
$G$ together with a \emph{signature} $\Sigma \subseteq E(G)$.  The
edges in $\Sigma$ are \emph{negative} and the other edges are \emph{positive}.
A cycle (or path) is \emph{balanced} if it contains an even number of negative
edges; otherwise it is \emph{unbalanced}.  We say
that a signed graph $(G, \Sigma)$ is \emph{balanced-cycle decomposable},
if $E(G)$ can be partitioned into balanced cycles of  $(G, \Sigma)$.

For a signed graph $(H, \Sigma)$ define $\graph(H,\Sigma)$ to be the
graph obtained from $H$ by subdividing every positive edge once. 
Then it is easy to observe the following lemma, because a cycle in $(H, \Sigma)$ is balanced if and only if the corresponding cycle in $\graph(H,\Sigma)$ is even. This lemma will be used later as we will frequently reduce signed graphs to graphs.
\begin{LEM}\label{lem:signed}
A signed graph $(H, \Sigma)$ is balanced-cycle decomposable if and only if
$\graph(H, \Sigma)$ is even-cycle decomposable.
\end{LEM}
For $X \subseteq V(G)$, we let $\delta_G(X)$ be the set of edges with exactly one end in $X$.  We say that $\delta_G(X)$ is the \emph{cut induced by $X$}.  Two signatures $\Sigma_1, \Sigma_2 \subseteq E(G)$ are \emph{equivalent} if their symmetric difference is a cut.  The operation of changing to an equivalent signature is called \emph{re-signing}.  A key observation is that if $\Sigma_1, \Sigma_2 \subseteq E(G)$ are equivalent signatures, then $(G, \Sigma_1)$ and $(G, \Sigma_2)$ have exactly the same set of balanced cycles.  Thus, for equivalent signatures $\Sigma_1$ and $\Sigma_2$, $(G, \Sigma_1)$ is balanced-cycle decomposable if and only if $(G, \Sigma_2)$ is balanced-cycle decomposable.

We will require the following well-known lemma, first proved by Zaslavsky~\cite{zaslavsky82}.

\begin{LEM}[Zaslavsky~\cite{zaslavsky82}] \label{resign}
Let $(G, \Sigma)$ be a signed graph.  For every forest $F$ which is a subgraph of $G$, there exists a signature which is disjoint from $E(F)$ and equivalent to $\Sigma$.
\end{LEM}

A \emph{minor} of a signed graph $(G, \Sigma)$ is a signed graph
that can be obtained from $(G,\Sigma)$ by any of the following
operations: re-signing, deleting edges or vertices, and contracting
positive edges.  For a graph $H$,  \emph{odd-$H$} is the signed graph
$(H, E(H))$.
A signed graph is \emph{odd-$H$-minor-free} if it has no minor that is
isomorphic to an odd-$H$.
A graph $G$ is \emph{odd-$H$-minor-free} if the odd-$G$ is odd-$H$-minor-free.

Now that our terms have been defined, we restate our main theorem.
\begin{THM}\label{thm:main-signed}
Every $2$-connected Eulerian loopless odd-$K_4$-minor-free signed graph with an even number of negative edges is balanced-cycle decomposable.
\end{THM}

As we discussed in Section~\ref{sec:intro}, $K_4$ cannot be replaced with $K_5$ in the above theorem. For that, 
we state an equivalent formulation of  Theorem~\ref{counterexample} and present a proof which is due to Cheolwon Heo.

\begin{THM}
There exists a $2$-connected Eulerian loopless odd-$K_5$-minor-free signed graph with an even number of negative edges which is not balanced-cycle decomposable.
\end{THM}

\begin{figure}
  \centering
  \tikzstyle{every node}=[circle,draw,fill=black!50,inner
  sep=0pt,minimum width=4pt]
  \begin{tikzpicture}
  \foreach \x in {0} {
  \draw (90+72*\x:2) node [label=$\x$] (w\x) {};
  }
  \foreach \x in {1} {
  \draw (90+72*\x:2) node [label=left:$\x$] (w\x) {};
  }
  \foreach \x in {4} {
  \draw (90+72*\x:2) node [label=right:$\x$] (w\x) {};
  }
  \foreach \x in {2,3} {
  \draw (90+72*\x:2) node [label=below:$\x$] (w\x) {};
  }
  \foreach \x/\y in {1/6,4/9} {
  \draw (90+72*\x:1) node [label=$\y$] (v\x) {};
  }
  \foreach \x/\y in {2/7} {
  \draw (90+72*\x:1) node [label=above left:$\y$] (v\x) {};
  }
    \foreach \x/\y in {0/5} {
  \draw (90+72*\x:1) node [label=left:$\y$] (v\x) {};
  }
  \foreach \x/\y in {3/8} {
  \draw (90+72*\x:1) node [label=above right:$\y$] (v\x) {};
  }
  \foreach \x in {0,1,2,3,4}{
    \draw [bend left] (v\x) to (w\x);
    \draw [dashed,bend right] (v\x) to (w\x);
  }
  \draw (v1)--(v3)--(v0)--(v2)--(v4)--(v1);
  \draw [dashed] (w0)--(w1)--(w2)--(w3)--(w4)--(w0);
  \end{tikzpicture}
  \caption{A $2$-connected Eulerian signed graph with no odd-$K_5$-minor and no balanced-cycle decompositions. (Solid lines denote negative edges.)}
  \label{fig:counterex}
\end{figure}
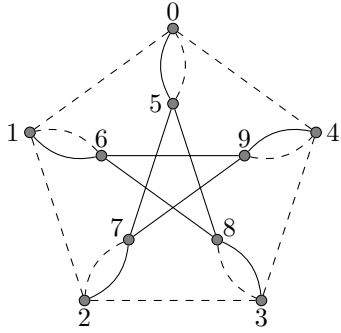
\begin{proof}
  We claim that the signed graph $(G, \Sigma)$ in Figure~\ref{fig:counterex} is such a signed graph.  Evidently, $G$ is $2$-connected, Eulerian and loopless.  We claim that $(G, \Sigma)$ is not balanced-cycle decomposable. Let $C$ be an arbitrary cycle in a balanced-cycle decomposition of $(G, \Sigma)$.  Note that $G$ only contains cycles of length $2,5,6,8$, or $9$.  Since all $2$-cycles are unbalanced, $C$ has length $5,6,8$, or $9$.  On the other hand, it is easy to check that if $C$ has length $5$ or $6$, then some block of $(G \setminus E(C), \Sigma \setminus E(C))$ is an unbalanced $2$-cycle, which is a contradiction.  Thus, $C$ has length $8$ or $9$.  But $|E(G)|=20$ and $8a+9b \neq 20$ for all non-negative integers $a,b$.  
  
  It remains to show that $(G, \Sigma)$ does not contain an odd-$K_5$-minor.  By degree considerations, the only way to obtain a $K_5$-minor from the Petersen graph is to contract a perfect matching.  Thus, to obtain odd-$K_5$ from $(G,\Sigma)$ we must delete exactly one edge from each $2$-cycle and then contract a perfect matching $M$.  Let $C$ be the balanced $5$-cycle $57968$.  It is easy to see that $|M \cap E(C)| \in \{0,2\}$. Thus, our odd-$K_5$-minor either contains a balanced $5$-cycle or a balanced $3$-cycle, which is a contradiction. 
\end{proof}

\section{Structure theorem for signed graphs with no odd-$K_4$-minor} \label{sec:structure}
In this section we describe the structure of signed graphs with no odd-$K_4$-minor.  We begin by describing the basic building blocks in the decomposition theorem.

\subsection*{Almost balanced.} A signed graph is
\emph{balanced} if every cycle is balanced.
Note that Lemma~\ref{resign} easily implies the following alternate definition of balanced signed graphs.
\begin{LEM}\label{bipartite}
  A signed graph is balanced if and only if we can re-sign so that all its edges are positive.
\end{LEM}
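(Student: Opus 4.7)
The forward direction is immediate: if $\Sigma$ can be re-signed to the empty signature, then every cycle contains zero odd edges and hence is even, so $(G,\Sigma)$ has no odd cycles and is bipartite by definition.

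For the converse, I would exploit Lemma~\ref{resign}. Assume without loss of generality that $G$ is connected (otherwise apply the argument componentwise). Let $T$ be a spanning tree of $G$. Since $T$ contains no cycle, Lemma~\ref{resign} supplies a signature $\Sigma'$ equivalent to $\Sigma$ with $\Sigma' \cap T = \emptyset$, i.e., every tree edge is even under $\Sigma'$.

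Now I would argue that every non-tree edge is also even under $\Sigma'$. For any edge $e \in E(G) \setminus T$, the fundamental cycle $C_e$ of $e$ with respect to $T$ consists of $e$ together with a path in $T$. Since all tree edges are even, the parity of $C_e$ is determined solely by $e$: the cycle $C_e$ is odd if and only if $e \in \Sigma'$. Because equivalent signatures yield the same set of even cycles and $(G,\Sigma)$ is bipartite, $(G,\Sigma')$ also has no odd cycles. Hence $C_e$ is even, forcing $e \notin \Sigma'$. Thus $\Sigma' = \emptyset$, which is exactly the required re-signing.

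There is really no obstacle here; the only subtlety is remembering that Lemma~\ref{resign} lets us clear any acyclic edge set from the signature, which, when applied to a spanning tree, reduces the problem to checking non-tree edges one at a time via their fundamental cycles.
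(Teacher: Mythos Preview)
Your argument is correct and is precisely the intended route: the paper does not give an explicit proof but merely notes that the lemma follows easily from Lemma~\ref{resign}, and your spanning-tree-plus-fundamental-cycle argument is the standard way to flesh that hint out. One cosmetic point: you have the direction labels swapped (in the statement ``bipartite $\Leftrightarrow$ can re-sign to empty'', the implication you call ``forward'' is actually the converse), but both directions are argued correctly so this does not affect the mathematics.
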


Since re-signing an Eulerian signed graph $(G,\Sigma)$ does not change
the parity of~$\abs{\Sigma}$, it follows that balanced Eulerian
signed graphs always contain an even number of negative edges.  We say that
a signed graph is \emph{almost balanced} if there exists a vertex
whose deletion yields a balanced signed graph.  Since balanced
signed graphs have no odd-$K_3$-minors, it follows that almost
balanced signed graphs have no odd-$K_4$-minors.

\subsection*{Planar with two unbalanced faces.} A signed graph $(G, \Sigma)$
is \emph{planar} if the underlying graph $G$ is planar.  A face $F$ of
a planar embedding of $(G, \Sigma)$ is \emph{balanced} if the facial walk
corresponding to $F$ contains an even number of negative edges, otherwise
$F$ is \emph{unbalanced}.  We say that $(G, \Sigma)$ is \emph{planar with at
  most two unbalanced faces} if $(G, \Sigma)$ has a planar embedding with at
most two unbalanced faces.
Notice that if every face of a planar embedding of $(G,\Sigma)$ is balanced, then $(G,\Sigma)$ is balanced.

Observe that the property of being planar with at most two unbalanced faces is
preserved under taking minors and that odd-$K_4$ does not have this
property.  Therefore, signed graphs that are planar with at most two
unbalanced faces do not have odd-$K_4$-minors.

\subsection*{The signed graph $\tilde{K_{3}^{2}}$.}  We define
$\tilde{K_{3}^{2}}$ to be the signed graph $(G, \Sigma)$ where $G$ is
a triangle with doubled edges and $\Sigma$ is 
the edge-set of a triangle (see Figure~\ref{fig:k32}).   Evidently,
$\tilde{K_{3}^{2}}$ has no odd-$K_4$-minor, but it is neither almost
balanced nor planar with two unbalanced faces.  Two more small signed graphs
that turn up in our proofs are $\tilde
    K_2^2$ (two vertices connected by a positive and a negative edge) and $\tilde
    K_2^2 \cdot \tilde
    K_2^2$ (two $\tilde{K}_2^2$'s joined at a vertex).  See
    Figure~\ref{fig:k32} for pictures of $\tilde{K_{3}^{2}}, \tilde K_2^2 \cdot \tilde
    K_2^2$, and $\tilde
    K_2^2$.

\begin{figure}
  \centering
  \tikzstyle{every node}=[circle,draw,fill=black!50,inner
  sep=0pt,minimum width=4pt]
  \subfloat[$\tilde K_{3}^2$]{
  \begin{tikzpicture}
    \draw [bend left] (90:1) node (v1) {} to (210:1) node (v2) {} to (330:1) node (v3) {} to (v1);
    \draw [dashed,bend right] (v1) to (v2) to (v3) to (v1);
  \end{tikzpicture}}
  \subfloat[$\tilde
    K_2^2 \cdot \tilde K_2^2$]{\hspace{5em}%
  \begin{tikzpicture}
    \draw [bend left] (90:1) node (v1) {} to (90:0) node (v2) {} to (-90:1) node (v3) {};
    \draw [dashed,bend right] (v1) to (v2) to (v3);
  \end{tikzpicture}\hspace{5em}}
  \subfloat[$\tilde K_2^2$]{
  \begin{tikzpicture}
    \draw [bend left] (210:1) node (v1) {} to (330:1) node (v2) {} ;
    \draw [dashed] (v1) to [bend right] (v2) ;
  \end{tikzpicture}}
  \caption{The signed graphs $\tilde K_{3}^2$, $\tilde
    K_2^2 \cdot \tilde K_2^2$, and $\tilde K_2^2$. (Solid lines denote negative edges.)}
  \label{fig:k32}
\end{figure}
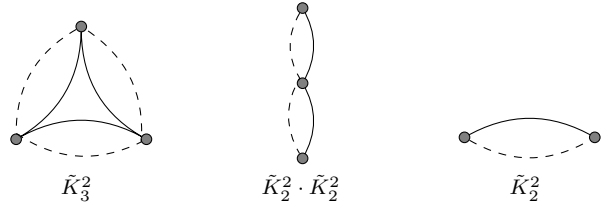

\medskip

These turn out to be essentially the only signed graphs with no odd-$K_4$-minor.  To define what we mean by `essentially', we introduce
the notion of separations.  A \emph{separation} of a graph
$G$ is a pair $(G_1,G_2)$ where $G_1$ and $G_2$ are edge-disjoint
subgraphs of $G$, such that $G_1 \cup G_2 = G$.  The
\emph{boundary} of $(G_1, G_2)$ is $V(G_1) \cap V(G_2)$, and its
\emph{order} is $\abs{V(G_1) \cap V(G_2)}$.  We say that $(G_1, G_2)$ is a
\emph{proper separation} if both $V(G_1) \setminus V(G_2)$ and $V(G_2)
\setminus V(G_1)$
are non-empty.  A separation of order $k$ is
called a \emph{$k$-separation}.  A \emph{separation} of a signed graph $(G, \Sigma)$ is simply a separation of $G$.  

We can now state the structure theorem.

\begin{THM}[{Lov\'asz, Seymour, Schrijver, and Truemper in Gerards~\cite[Theorem 3.2.4]{gerards}}] \label{structure}
Let $(G, \Sigma)$ be a signed graph with no odd-$K_4$-minor.  Then at
least one of the following holds.
\begin{enumerate}[(i)]
\item
$(G, \Sigma)$ is almost balanced, or planar with two unbalanced faces, or isomorphic to~$\tilde{K_{3}^{2}}$.

\item
$(G, \Sigma)$ is not $2$-connected.

\item
$(G, \Sigma)$ has a $2$-separation $(G_1, G_2)$ where each
$(G_i,E(G_i)\cap \Sigma)$ is
connected and not a signed subgraph of $\tilde{K_{2}^{2}}$.

\item
$(G, \Sigma)$ has a $3$-separation $(G_1, G_2)$ where $(G_2,E(G_2)\cap
\Sigma)$ is balanced, connected, and has at least $4$ edges.
\end{enumerate}
\end{THM}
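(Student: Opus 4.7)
The plan is to reduce the statement to the decomposition theory of binary matroids. To a signed graph $(G,\Sigma)$ one associates its even cycle matroid $M(G,\Sigma)$, a binary matroid on ground set $E(G)$ whose circuits are the even cycles of $(G,\Sigma)$ together with the minimal two-odd-cycle ``handcuffs''. Under this correspondence, signed-graph minor operations (re-signing, deleting edges or vertices, contracting even edges) become matroid minors, and an odd-$K_4$-minor of $(G,\Sigma)$ corresponds to an $F_7$-minor of $M(G,\Sigma)$. Thus the hypothesis of the theorem becomes: $M(G,\Sigma)$ is a signed-graphic binary matroid with no $F_7$-minor.

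With this translation in hand, I would invoke the Truemper--Tseng decomposition~\cite{TT1986,nofano} for binary matroids with no $F_7$-minor, combined with Seymour's regular matroid decomposition~\cite{regular} for the regular pieces that appear. The combined output is a decomposition of $M(G,\Sigma)$ as a 1-, 2-, or 3-sum along a small cocircuit into pieces each of which is graphic, cographic, or a specific small exceptional matroid.

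The third step is to translate the matroid decomposition back to signed-graph language. A matroid 1-sum yields a 1-separation of $(G,\Sigma)$, giving case (ii). A proper matroid 2-sum yields a 2-separation, and the prohibition in case (iii) that neither side be a signed subgraph of $\tilde{K_2^2}$ is exactly what forces the 2-sum to be non-trivial at the matroid level. A matroid 3-sum of the type we obtain, with one graphic side, becomes a 3-separation in which the graphic side can be re-signed to be bipartite, giving case (iv). For the matroid base cases: graphic even cycle matroids correspond precisely to almost-bipartite signed graphs (after re-signing, all odd edges can be pushed onto a single vertex, whose splitting exhibits the underlying unsigned graph); cographic ones correspond, via planar duality, to signed graphs that are planar with at most two odd faces (the two odd faces being the image of the distinguished ``handcuff'' cocircuit); and the unique small exceptional matroid that arises is realized by $\tilde{K_3^2}$.

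The hard part will be the translation dictionary---in particular, the base-case equivalences ``graphic $\Leftrightarrow$ almost bipartite'' and ``cographic $\Leftrightarrow$ planar with at most two odd faces'', and the verification that matroid 2- and 3-sums produce signed-graph separations with the stated connectivity and bipartiteness decorations. Producing a signed graph from an abstract matroid presentation requires Whitney's 2-isomorphism theorem to pin down the underlying graph up to 2-switches, and then tracking the signature through the ambiguity of re-signing. Seymour's and Truemper--Tseng's theorems supply the structural skeleton, but this dictionary between signed graphs and their binary matroids carries essentially all of the combinatorial substance of Theorem~\ref{structure}.
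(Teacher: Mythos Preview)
The paper does not actually prove Theorem~\ref{structure}; it quotes the result from Gerards~\cite{gerards} and merely sketches its provenance in the introduction, remarking that the proof goes through Seymour's regular matroid decomposition~\cite{regular} and is a special case of the Truemper--Tseng decomposition for binary matroids with no $F_7$-minor using a fixed element~\cite{TT1986,nofano}. Your proposal is therefore not being compared against an argument in the paper but against this one-sentence pointer, and at that level your plan is exactly the route the paper points to: pass to the even cycle matroid, apply binary matroid decomposition, and translate back.

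One technical point is worth tightening. You phrase the matroid hypothesis as ``$M(G,\Sigma)$ has no $F_7$-minor'', but the paper is careful to say ``no $F_7$-minor \emph{using a fixed element}''. The distinction matters: the natural object is not $M(G,\Sigma)$ alone but the binary matroid obtained from the cycle matroid $M(G)$ by coextending with an element $e_\Sigma$ encoding the signature, and the correct translation of ``no odd-$K_4$-minor'' is that this coextension has no $F_7$-minor \emph{containing $e_\Sigma$}. This is precisely the setting of Truemper--Tseng, and it is why that theorem (rather than, say, Tutte's characterization of regular matroids) is the right structural input. Your dictionary entries (graphic $\leftrightarrow$ almost bipartite, cographic $\leftrightarrow$ planar with two odd faces, the sporadic piece $\leftrightarrow$ $\tilde{K_3^2}$, and $k$-sums $\leftrightarrow$ $k$-separations with the stated side conditions) are correct in outline, but each needs to be proved relative to the fixed element $e_\Sigma$; without tracking $e_\Sigma$ through the decomposition you cannot recover the signed-graph side conditions in (iii) and (iv), and the base-case identifications also become ambiguous. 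In short, your strategy is the intended one, but the claim ``odd-$K_4$-minor of $(G,\Sigma)$ corresponds to an $F_7$-minor of $M(G,\Sigma)$'' should be restated in the pointed (fixed-element) form before the matroid machinery is invoked.
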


\section{Almost balanced signed graphs} \label{sec:bipartite}
We begin by proving Theorem~\ref{thm:main-signed} for almost balanced signed graphs.
We require the following lemma.

\begin{LEM} \label{surprising}
If $G$ is a connected graph and $X$ is a set of $2k$ vertices of $G$, then there is a collection of $k$ pairwise edge-disjoint paths in $G$ whose set of ends is precisely $X$.
\end{LEM}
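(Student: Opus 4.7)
The approach is to reduce to a spanning tree. Let $T$ be any spanning tree of $G$; since $T \subseteq G$, any collection of edge-disjoint paths in $T$ is automatically edge-disjoint in $G$, so it suffices to find the $k$ required paths inside $T$.

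Within $T$, construct the canonical $X$-join: root $T$ at an arbitrary vertex $r$, write $T_v$ for the subtree rooted at each non-root vertex $v$, and place the edge from $v$ to its parent into $F \subseteq E(T)$ precisely when $\abs{V(T_v) \cap X}$ is odd. A short parity calculation, which uses the hypothesis $\abs{X} = 2k$ to handle the root consistently, shows that the set of vertices of odd degree in the subgraph $(V(T), F)$ is exactly $X$. Concretely, for a non-root $v$ with children $c_1, \ldots, c_d$ one gets $\deg_F(v) \equiv \abs{V(T_v) \cap X} + \sum_i \abs{V(T_{c_i}) \cap X} \equiv [v \in X] \pmod 2$, and the root is handled by the parity of $\abs{X}$.

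To finish, decompose $F$ into $k$ paths by induction on $k$. The case $k = 0$ is trivial: $F$ is a forest without any odd-degree vertex, hence has no edges. For $k \geq 1$, pick a connected component $C$ of $F$ that contains an edge; every leaf of $C$ has degree $1$ in $F$, and therefore lies in $X$ by the parity claim. Choose such a leaf $x$, let $y$ be any other vertex of $X \cap V(C)$, and let $P$ be the unique $x$--$y$ path in $C$. Then $F \setminus E(P)$ is a forest whose set of odd-degree vertices is $X \setminus \{x, y\}$, so the inductive hypothesis supplies $k-1$ further edge-disjoint paths inside $T$, and together with $P$ these form the $k$ paths required.

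The only step needing any real care is the parity identification of the odd-degree vertices of $F$ with $X$; once that is in place, the inductive decomposition is routine, and the tree reduction at the start is essentially free. An alternative route avoiding the explicit $X$-join is a direct induction on $\abs{V(G)}$, repeatedly peeling off a leaf $\ell$ of $T$: if $\ell \in X$ with neighbor $v \in X$ take the path $\ell v$ and remove $\ell, v$ from $X$, if $\ell \in X$ with $v \notin X$ replace $X$ by $(X \setminus \{\ell\}) \cup \{v\}$ and later extend the path ending at $v$ by the edge $\ell v$, and if $\ell \notin X$ simply delete $\ell$; but the $X$-join formulation makes the correctness of the pairing most transparent.
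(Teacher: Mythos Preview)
Your proof is correct. Both you and the paper begin by reducing to a spanning tree $T$, but from there the arguments diverge. The paper runs a minimal-counterexample argument that peels off a leaf $\ell$ of $T$: if $\ell\notin X$ delete it, if $\ell\in X$ and its neighbour $w\in X$ use the edge $\ell w$ as one of the paths, and if $\ell\in X$ but $w\notin X$ contract $\ell w$ and recurse. This is precisely the ``alternative route'' you sketch in your final paragraph. Your main argument instead builds the canonical $X$-join $F\subseteq E(T)$ explicitly, verifies that its odd-degree vertex set is exactly $X$, and then decomposes the forest $F$ into paths by a separate induction on $k$. The $X$-join formulation is a bit more structural---it identifies in one shot the entire edge set that the paths will use---while the paper's leaf-peeling is shorter and avoids the auxiliary parity computation. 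Both are standard and either would serve the paper's purposes.
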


\begin{proof}
Let $(G,X)$ be a counterexample with $\abs{E(G)}$ minimum.  Note that $G$ must be a tree, since every spanning tree of $G$ is also a counterexample.  Next, observe that each leaf of $G$ is in $X$, otherwise deleting such a leaf contradicts minimality.  Let $\ell$ be a leaf and let $w$ be the unique neighbour of $\ell$.  If $w \in X$, then $(G-\ell, X \setminus \{\ell,w\})$ is a smaller counterexample since we can link $\ell$ and $w$ via the edge $\ell w$.  On the other hand, if $w \notin X$, then deleting $\ell$ and adding $w$ to $X$ yields a smaller counterexample.
\end{proof}
\begin{PROP} \label{almostbipartite}
Every $2$-connected Eulerian loopless almost balanced signed graph with an even number of negative edges is balanced-cycle decomposable.
\end{PROP}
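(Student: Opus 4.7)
The plan is to reduce to the signature-free case by concentrating all odd edges at the ``almost bipartite'' vertex and then pairing them off via paths supplied by Lemma~\ref{surprising}. Fix $v\in V(G)$ with $G-v$ bipartite as a signed graph. By Lemma~\ref{bipartite} applied to $G-v$, there is a set $Y\subseteq V(G)\setminus\{v\}$ with $\delta_{G-v}(Y)=\Sigma\cap E(G-v)$; re-signing $(G,\Sigma)$ via the cut $\delta_G(Y)$ of $G$ then makes every edge of $G-v$ even, so I may assume $\Sigma\subseteq \delta_G(v)$. Because $G$ is Eulerian, every cut of $G$ has even size, so this re-signing preserves the parity of $|\Sigma|$; in particular $|\Sigma|$ remains even.

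Next, I greedily remove pairs of parallel odd edges at $v$: any such pair is a $2$-cycle containing exactly two odd edges, hence an even signed cycle. After this step, the remaining odd edges at $v$ go to distinct other endpoints $u_1,\dots,u_{2k}$, with $2k$ still even. Since $G$ is $2$-connected, $G-v$ is connected, so Lemma~\ref{surprising} applied to $G-v$ with the vertex set $\{u_1,\dots,u_{2k}\}$ yields $k$ pairwise edge-disjoint paths in $G-v$ pairing up the $u_i$'s. For each such path $P$ from $u_i$ to $u_j$, the edge set $E(P)\cup\{vu_i,vu_j\}$ is a simple cycle with exactly two odd edges, hence again an even signed cycle.

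These $k$ cycles, together with the $2$-cycles removed above, are pairwise edge-disjoint and collectively use every odd edge of $G$. What remains is an Eulerian loopless subgraph $G'$ with no odd edges, so Proposition~\ref{trivial} (equivalently, the classical fact that every Eulerian loopless graph decomposes into simple cycles) furnishes a cycle decomposition of $G'$; each such cycle contains no odd edges and is therefore vacuously even as a signed cycle. Combining this with the cycles constructed above produces the desired even cycle decomposition of $(G,\Sigma)$.

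No step looks like a serious obstacle. The only subtleties are the preservation of $|\Sigma|$'s parity under re-signing (which crucially needs the Eulerian hypothesis, since cuts of Eulerian graphs have even size) and the mild bookkeeping with parallel odd edges at $v$ so that Lemma~\ref{surprising} is applied to a set of distinct vertices; the actual work is done by Lemma~\ref{surprising}, with the rest being essentially an accounting exercise.
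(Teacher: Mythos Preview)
Your argument is correct and hinges on the same key tool as the paper's, Lemma~\ref{surprising}: pair off designated neighbours of $v$ by edge-disjoint paths in $G-v$, close each path through $v$ into an even signed cycle, and decompose the Eulerian leftover. The difference lies in the reduction step. The paper first passes to $\graph(H,\Sigma)$ (subdividing every even edge) so as to work with ordinary even-\emph{length} cycles in an almost-bipartite graph; it must then track the bipartition $(A,B)$ of $G-v$ and pair only the neighbours of $v$ lying in $A$, so that afterwards $v$ is adjacent only to $B$ and the residual graph is genuinely bipartite. You instead re-sign to push $\Sigma$ into $\delta_G(v)$ and remove \emph{exactly} the odd edges; the residual then has empty signature, so any cycle decomposition of it suffices and no bipartition bookkeeping is needed. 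Your route is a bit more direct for the signed statement; the paper's detour has the side benefit of yielding an even-length cycle decomposition of $\graph(H,\Sigma)$, which is what one would want if the unsigned version were the goal. One cosmetic point: the reference to Proposition~\ref{trivial} for the residual is slightly off, since that proposition concerns bipartite graphs whereas your residual need not be bipartite as a graph---but your parenthetical (``every Eulerian loopless graph decomposes into simple cycles'') is the correct fact, and each such cycle is vacuously even in the signed sense.
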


\begin{proof}
Let $(H, \Sigma)$ be
a $2$-connected Eulerian loopless almost balanced signed graph with
an even number of negative edges.  Call a graph $G$ \emph{almost bipartite} if $G-v$ is bipartite for some $v \in V(G)$.  By replacing $(H, \Sigma)$ with
$\graph(H, \Sigma)$, it suffices to show that every $2$-connected
Eulerian loopless almost bipartite \emph{graph} with an even number of
edges is even-cycle decomposable.  Let $G$ be such a graph and let $v$
be a vertex of $G$ such that $G - v$ is bipartite.  We may assume that there are at most two parallel edges between every pair of vertices, else we can remove a $2$-cycle and apply induction.

Let $(A,B)$ be a bipartition of $G-v$.  Let $X$ be the set of
neighbours of $v$ in $A$ and partition $X$ as $X_1 \cup X_2$, where $x
\in X_i$ if and only if there are $i$ edges between $x$ and $v$.
Since $|E(G)|$ is even, it follows that $\abs{X_1}$ is even because
\[
\abs{E(G)}=\sum_{u \in B} \deg_{G} (u) + 2\abs{X_2} + \abs{X_1}.
\]

Now, since $G$ is $2$-connected, the graph $G-v$ is connected.
Therefore, by Lemma~\ref{surprising} there is a collection
$\mathcal{P}$ of $\abs{X_1}/2$ pairwise edge-disjoint paths in $G-v$
whose set of ends is precisely $X_1$.  Note that each path $P \in
\mathcal{P}$ has even length since $G-v$ is bipartite.  Thus we may
combine the paths in $\mathcal{P}$ with edges between $v$ and
$X_1$ to obtain a family $\mathcal{C}_1$ of $\abs{X_1}/2$ pairwise
edge-disjoint even cycles.  Evidently, the edges between $v$ and
$X_2$ can be partitioned into a family $\mathcal{C}_2$ of $2$-cycles.
Let $\mathcal{E}$ be the set of edges in $\mathcal{C}_1 \cup
\mathcal{C}_2$.  Observe that the graph $G - \mathcal{E}$ is bipartite
since in $G- \mathcal{E}$, the vertex $v$ is only adjacent to vertices in $B$.  Hence, $G- \mathcal{E}$ is even-cycle decomposable and we are done.
\end{proof}

\section{Planar signed graphs with two unbalanced faces} \label{sec:planar}
We now prove that planar signed graphs with two unbalanced faces are balanced-cycle decomposable.
Note that this follows from Theorem~\ref{seymour}, but we give a short
proof in order to keep our proof of Theorem~\ref{thm:main-signed} self-contained.

\begin{PROP} \label{oddplanar}
Every $2$-connected Eulerian loopless planar signed graph with an even number of negative edges and exactly two unbalanced faces is balanced-cycle decomposable.
\end{PROP}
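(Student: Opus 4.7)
The plan is to reduce Proposition~\ref{oddplanar} directly to Seymour's theorem (Theorem~\ref{seymour}) by applying the subdivision operation $\graph$ introduced in Section~\ref{sec:bipartite}. Given a signed graph $(G,\Sigma)$ satisfying the hypotheses, I would form $G' = \graph(G,\Sigma)$, which is $G$ with every even edge subdivided once. Recall that $(G,\Sigma)$ is even cycle decomposable if and only if $G'$ is, so it suffices to show that $G'$ fits the hypotheses of Theorem~\ref{seymour}.

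The verification is essentially routine: edge subdivision preserves planarity, looplessness, and 2-connectivity, and each subdivision vertex has (even) degree $2$ while all original vertices keep their original even degrees, so $G'$ is 2-connected, Eulerian, loopless, and planar. The edge count is
\[
|E(G')| = |\Sigma| + 2(|E(G)|-|\Sigma|) = 2|E(G)|-|\Sigma|,
\]
which is even because $|\Sigma|$ is. Theorem~\ref{seymour} now yields an even cycle decomposition of $G'$, and undoing the subdivisions gives an even cycle decomposition of $(G,\Sigma)$.

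Two remarks on this plan. First, it never uses the ``exactly two odd faces'' assumption: the argument goes through whenever $|\Sigma|$ is even, so what is really being proved is that every 2-connected Eulerian loopless planar signed graph with an even number of odd edges is even cycle decomposable. Second, this route invokes Theorem~\ref{seymour}, whereas the authors' stated preference is to keep the proof self-contained. If one insists on avoiding Seymour, the natural alternative plan is to re-sign $\Sigma$ to a shortest path $e_1,\dots,e_k$ in the dual $G^*$ from the dual vertex of $F_1$ to that of $F_2$, to observe that for any intermediate face $f_i$ along this dual path the facial boundary is an even cycle (its only odd edges are $e_i$ and $e_{i+1}$, by the shortest-path property), and to induct on $|E(G)|$ by peeling off such a facial boundary; a short dual-contraction check shows that the two-odd-face condition is preserved. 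In that alternative plan the main obstacle would be the potential loss of 2-connectivity, since removing a facial cycle from a plane graph can introduce cut vertices or disconnect the graph, forcing one to carry out the induction block-by-block and to verify that the two surviving odd faces are distributed appropriately among the resulting blocks.
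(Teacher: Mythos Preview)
Your main argument via Seymour's theorem is correct; indeed the paper itself remarks, just before stating the proposition, that the result ``follows from Theorem~\ref{seymour}, but we give a short proof in order to keep our main theorem self-contained.'' So you have recovered exactly the route the authors chose to bypass, and your observation that the two-odd-faces hypothesis is unused in that route is also correct.

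The paper's self-contained proof, however, is not your alternative inductive plan of peeling off a facial cycle along a dual path; it is considerably simpler and avoids the $2$-connectivity headaches you anticipate. After passing to $\graph(H,\Sigma)$ just as you do, one has an ordinary plane graph $G$ with an even number of edges and exactly two odd-length faces $F_1,F_2$. Because $G$ is Eulerian, the dual $G^{*}$ is bipartite; let $(A,B)$ be its bipartition. Since $\sum_{F\in A}\deg_{G^{*}}(F)=\abs{E(G)}$ is even and every face other than $F_1,F_2$ has even degree in $G^{*}$, the two odd faces must lie on the same side, say $A$. Then the faces in $B$ form a vertex cover of $G^{*}$, i.e.\ every edge of $G$ lies on the boundary of exactly one face in $B$, and each such facial boundary is an even cycle (here the $2$-connectedness guarantees that faces are bounded by cycles). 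Hence the faces in $B$ already give an even cycle decomposition of $G$ in one stroke, with no induction and no worry about preserving $2$-connectivity. This is where the two-odd-faces hypothesis actually does work: it is precisely what lets both odd faces sit in $A$, leaving $B$ composed entirely of even faces.
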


\begin{proof}
 Let $(H,
\Sigma)$ be a $2$-connected Eulerian loopless planar signed graph with
an even number of negative edges and exactly two unbalanced faces.  By passing to
$\graph(H, \Sigma)$, it suffices to show that every $2$-connected
Eulerian loopless planar \emph{graph} with an even number of edges and
exactly two odd-length faces is even-cycle decomposable.  Let $G$ be such a
graph and let $F_1$ and $F_2$ be the two odd-length faces of $G$.
Since $G$ is Eulerian, the dual graph $G^*$  of $G$ is bipartite.  Let $(A,B)$ be a bipartition of $V(G^*)$.  Since $G$ (and hence also $G^*$) has an even number of edges, $F_1$ and $F_2$ must be on the same side of the bipartition, say $F_1, F_2 \in A$.  Since $B$ is both an independent set and a vertex cover of $G^*$, the faces in $G$ corresponding to the vertices in $B$ form an even-cycle decomposition of $G$.
\end{proof}

\section{Proof of Theorem~\ref{thm:main-signed}} \label{sec:proof}
In this section we prove Theorem~\ref{thm:main-signed}, thus proving our main theorem, Theorem~\ref{oddk4free}.  We start with a simple
parity lemma.

\begin{LEM}\label{parity}
  Let $G$ be an Eulerian graph and let $(G_1,G_2)$ be a
  separation. Then
  \[
  \sum_{v\in V(G_1)\cap V(G_2)} \deg_{G_1}(v)\equiv
  \sum_{v\in V(G_1)\cap V(G_2) }\deg_{G_2}(v)\equiv
      0\pmod 2.
  \]
\end{LEM}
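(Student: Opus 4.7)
The plan is to reduce everything to two facts: the handshake lemma applied to $G_1$, and the fact that every vertex of $G$ has even degree in $G$. I would begin by observing that since $G_1$ and $G_2$ are edge-disjoint with $G_1 \cup G_2 = G$, for every vertex $v$ we have $\deg_G(v) = \deg_{G_1}(v) + \deg_{G_2}(v)$, with the convention that a vertex not belonging to $G_i$ has degree $0$ there. Since $G$ is Eulerian, $\deg_G(v)$ is even for every $v$, so $\deg_{G_1}(v)\equiv \deg_{G_2}(v)\pmod 2$ for every $v\in V(G_1)\cap V(G_2)$. This immediately yields the first congruence in the statement, reducing the problem to proving the second.

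For the second congruence, I would apply the handshake lemma to $G_1$ to get
\[
\sum_{v\in V(G_1)} \deg_{G_1}(v) = 2\abs{E(G_1)}\equiv 0\pmod 2,
\]
and then split the sum according to whether $v$ lies in $V(G_2)$ or not:
\[
\sum_{v\in V(G_1)\cap V(G_2)} \deg_{G_1}(v) + \sum_{v\in V(G_1)\setminus V(G_2)} \deg_{G_1}(v)\equiv 0\pmod 2.
\]
For each $v\in V(G_1)\setminus V(G_2)$, we have $\deg_{G_2}(v)=0$, and hence $\deg_{G_1}(v)=\deg_G(v)$, which is even. Therefore the second summand is even, and the first summand must also be even, giving the desired congruence. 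The same argument with the roles of $G_1$ and $G_2$ swapped (or just combining with the first congruence) handles the $G_2$ sum.

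There is no real obstacle here; the lemma is a direct bookkeeping exercise once one writes the handshake sum over $G_1$ and separates the boundary vertices from the interior ones. The only care needed is the convention for vertices not in $V(G_i)$, which is harmless because such a vertex contributes $0$ to $\deg_{G_i}$.
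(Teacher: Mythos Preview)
Your proof is correct and follows essentially the same approach as the paper: both combine the handshake identity $\sum_{v\in V(G_1)}\deg_{G_1}(v)=2\abs{E(G_1)}$ with the fact that every vertex of $G$ has even degree. The paper packages this into the single equation $\sum_{v\in V(G_1)}\deg_G(v)=2\abs{E(G_1)}+\sum_{v\in V(G_1)\cap V(G_2)}\deg_{G_2}(v)$, while you unfold the same computation in two steps, but the content is identical.
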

\begin{proof}
  Observe that
  \begin{align*}
  \sum_{v\in V(G_1)}\deg_G (v)= 2 \abs{E(G_1)}+
  \sum_{v\in V(G_1)\cap V(G_2)}\deg_{G_2} (v).
  \end{align*}
  The lemma then follows easily from the above equation.
\end{proof}

A $2$-separation $(G_1,G_2)$ of an Eulerian graph $G$ is 
\emph{odd} if $\deg_{G_1} (v)$ is odd for all vertices $v\in V(G_1)\cap V(G_2)$.  It is \emph{even} if $\deg_{G_1} (v)$ is even for for all vertices $v\in V(G_1)\cap V(G_2)$. By Lemma~\ref{parity}, every $2$-separation of an Eulerian graph is either odd or even.  

Our next lemma
asserts that $2$-connected Eulerian graphs with at least one
even $2$-separation can be decomposed into a `necklace structure' of
Eulerian subgraphs.

\begin{LEM}\label{bead}
  Let $G$ be a $2$-connected Eulerian loopless graph having an
  even $2$-separation $(G_1,G_2)$ such that $G_1$ and $G_2$ are connected.
 Then  there exist pairwise
  edge-disjoint connected Eulerian subgraphs $B_1$, $B_2$, $\ldots$,
  $B_n$ of $G$  with $n\ge 2$
  satisfying the following.
  \begin{enumerate}[(i)]
  \item $\bigcup_{i=1}^n E(B_i)=E(G)$.
  \item For each $i$, either $B_i$ is $2$-connected or
    $B_i$ has two vertices.
  \item \begin{enumerate}
  \item
  Either $n=2$ and $(B_1,B_2)$ is a $2$-separation of $G$, 
  \item
  or $n \geq 3$ and for all $1\le i<j\le n$,
    \[\abs{V(B_i)\cap V(B_j)}=
    \begin{cases}
      1 & \text{if }i-j\equiv \pm1 \pmod n,\\
      0&\text{otherwise.}
    \end{cases}\] 
  \end{enumerate}
  \item There exists  $k$ such that
    $B_1\cup B_2\cup \cdots B_k=G_1$
    and $B_{k+1}\cup \cdots \cup B_n=G_2$.
  \end{enumerate}
\end{LEM}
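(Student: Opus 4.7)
Let $\{u,v\}$ denote the boundary of the $2$-separation. Because $(G_1,G_2)$ is an \emph{even} $2$-separation, both $\deg_{G_1}(u)$ and $\deg_{G_1}(v)$ are even; every other vertex of $G_1$ has the same degree in $G_1$ as in the Eulerian graph $G$, so $G_1$ itself is Eulerian, and by the same reasoning so is $G_2$. In particular neither subgraph has a bridge (a bridge in an Eulerian multigraph would cut off a subgraph with exactly one odd-degree vertex), and a routine parity argument on an Eulerian closed walk shows that every block of $G_1$ and of $G_2$ is itself Eulerian. Hence every such block is either $2$-connected with at least three vertices or a pair of parallel edges on two vertices.

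The plan is to take the $B_i$ to be precisely the blocks of $G_1$ and of $G_2$, read off cyclically along a ``tour'' from $u$ through $G_1$ to $v$ and back through $G_2$ to $u$. The key step is to prove the block-cut tree $T$ of $G_1$ is a path from the block containing $u$ to the block containing $v$. First, $u$ is not a cut vertex of $G_1$: otherwise some component $C$ of $G_1-u$ would miss $v$, and since $V(G_1)\cap V(G_2)=\{u,v\}$, the subgraph $C$ would also be a component of $G-u$, contradicting $2$-connectivity of $G$. Symmetrically $v$ is not a cut vertex. Applied to any other cut vertex $w$ of $G_1$, the same argument forces $G_1-w$ to have exactly two components, one containing $u$ and one containing $v$; so $w$ separates $u$ from $v$ in $G_1$, and moreover $w$ is contained in exactly two blocks of $G_1$. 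Thus in $T$ every cut vertex has degree two and lies on the unique path from the block containing $u$ to the block containing $v$; any block not on this path would have to attach via a cut vertex of degree at least three, which is impossible. So $T$ is a path. The same reasoning gives a path block-cut tree for $G_2$.

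Label the blocks of $G_1$ along its block-cut path as $B_1,\ldots,B_k$ (from the side of $u$ to the side of $v$) and the blocks of $G_2$ along its block-cut path as $B_{k+1},\ldots,B_n$ (from $v$ back to $u$). Conditions (i), (ii), and (v) follow immediately from this definition together with the first paragraph. For (iii) when $n=2$, $B_1=G_1$ and $B_2=G_2$, recovering the given $2$-separation. For (iv) when $n\ge 3$, consecutive blocks within $G_1$ share exactly the corresponding cut vertex of $G_1$ while non-consecutive blocks within $G_1$ share no vertex (their block-cut tree is a path), and the same holds inside $G_2$; any cross intersection $V(B_i)\cap V(B_j)$ with $i\le k<j$ must lie in $V(G_1)\cap V(G_2)=\{u,v\}$, and by construction $u$ is contained only in $B_1$ and $B_n$ while $v$ is contained only in $B_k$ and $B_{k+1}$, precisely matching the cyclic consecutive pairs. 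I expect the block-cut tree analysis of the second paragraph to be the main obstacle, since it is there that $2$-connectivity of $G$ is used essentially to rule out dangling blocks that would destroy the necklace shape.
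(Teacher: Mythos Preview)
Your proof is correct and takes a genuinely different route from the paper's. The paper argues extremally: it starts with the sequence $G_1, G_2$ (which already satisfies (i), (iii), (iv), (v)), chooses any such sequence with $n$ maximal, and then shows (ii) by contradiction---if some $B_i$ with at least three vertices has a $1$-separation $(F_1,F_2)$, the $2$-connectivity of $G$ forces $F_1$ and $F_2$ to be Eulerian and to slot into the cyclic sequence in place of $B_i$, contradicting maximality of $n$. Your approach is instead a direct construction: you take the $B_i$ to be exactly the blocks of $G_1$ and of $G_2$, and use a block--cut tree analysis (again driven by the $2$-connectivity of $G$) to show that each of these trees is a path with $u$ and $v$ lying in its two end blocks. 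Your argument buys an explicit identification of the beads as the blocks of the two sides, and makes it clear the necklace is essentially unique; the paper's extremal argument trades this for a shorter proof that never mentions block--cut trees. One small imprecision worth noting: a two-vertex Eulerian block can consist of any even number $\geq 2$ of parallel edges, not just a single pair, but since condition~(ii) only asks that such a $B_i$ have two vertices, this does not affect your argument.
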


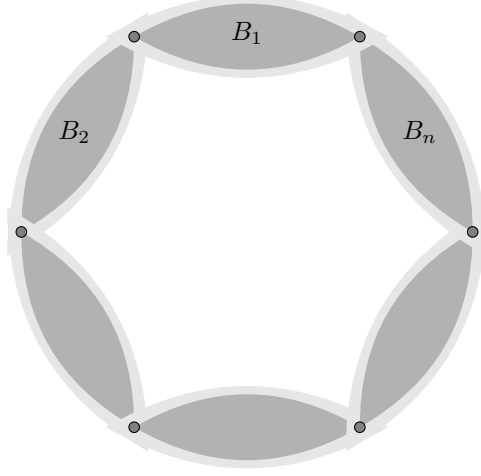
\begin{figure}
\centering
\tikzstyle{v}=[circle,draw,fill=black!50,inner
  sep=0pt,minimum width=4pt,solid,thin]
  \tikzstyle{b}=[fill=black!30, circular glow={fill=black!10},bend left]
\begin{tikzpicture}
\fill [b] (120:3) to (180:3) to (120:3);
\fill [b] (180:3) to (-120:3) to (180:3);
\fill [b] (-120:3) to (-60:3) to (-120:3);
\fill [b] (-60:3) to (0:3) to (-60:3);
\fill [b] (0:3) to (60:3) to (0:3);
\fill [b] (60:3) to (120:3) to (60:3);
\foreach \x in {0,60,120,180,-120,-60} {
\draw (\x:3) node [v]{};
}
\node at(90:2.65) {$B_1$};
\node at(30:2.65) {$B_n$};
\node at(150:2.65) {$B_2$};
\end{tikzpicture}
\caption{A \emph{necklace decomposition} into \emph{beads} $B_1, \dots, B_n$.}
\label{fig:beads}
\end{figure}
\begin{proof}
  We choose pairwise
  edge-disjoint connected Eulerian subgraphs $B_1$, $B_2$, $\ldots$, $B_n$ with
  $n\ge 2$
  satisfying (i), (iii), and (iv)
  so that $n$ is maximized.
  Such a choice must exist because the  sequence $G_1$, $G_2$ satisfies
  (i), (iii), and (iv).
  Let $B_{n+1}=B_1$ and $B_0=B_n$.
  Note that $\abs{V(B_i)}>1$ because otherwise
  either $V(B_{i-1})\cap
  V(B_{i+1})\neq\emptyset$ when $n>2$
  or $\abs{V(B_1)\cap V(B_2)}\le 1$ when $n=2$.

  Suppose that (ii) is false.
  By symmetry, we may assume that $\abs{V(B_1)}\ge 3$ and
  $B_1$ has a $1$-separation $(F_1,F_2)$ such that both $F_1$ and
  $F_2$ have at least two vertices.
  Let $v$ be the vertex on the boundary of $(F_1,F_2)$.

  Suppose that $n=2$.
  Since $v$ is not a cut vertex of $G$,
  $v\notin V(B_1)\cap V(B_2)$
  and $\abs{V(B_2)\cap V(F_1)}=\abs{V(B_2)\cap V(F_2)}=1$.
  Since $B_1$ is an Eulerian subgraph,
  both $F_1$ and $F_2$ are Eulerian.
  Then a sequence $F_1$, $F_2$, $B_2$ satisfies (i), (iii), and (iv) and
  therefore it contradicts our assumption that $n$ is maximum.

  Thus $n>2$.
  If $F_2$ meets both $B_n$ and $B_2$, then
  $v$ is a cut vertex of $G$, contradicting the assumption that $G$ is
  $2$-connected.
  Thus $F_2$ meets at most one of $B_n$ and $B_2$.
  Similarly $F_1$ meets at most one of $B_n$ and $B_2$.
  Since each $B_i$ is Eulerian, $(F_1,F_2\cup \bigcup_{i=2}^n B_i)$ is an
  even $2$-separation and therefore $F_1$ is Eulerian.
  Similarly $F_2$ is Eulerian because $(F_2,F_1\cup \bigcup_{i=2}^n B_i)$
  is an even $2$-separation.

  We may assume that $F_1$ meets $B_n$ and $F_2$ meets $B_2$.
  Then we consider a sequence
  $F_1$, $F_2$, $B_2$, $\ldots$, $B_n$ of edge-disjoint connected
  Eulerian subgraphs satisfying (i), (iii), and (iv). This contradicts the assumption that $n$ is
  chosen to be maximum.
\end{proof}

We call any $B_1, \dots, B_n$ given by Lemma~\ref{bead} a
\emph{necklace decomposition} of $(G, \Sigma)$ (See
Figure~\ref{fig:beads}).
Each $B_i$ is called a \emph{bead} of the necklace decomposition.

We now proceed to prove a sequence of lemmas concerning a
counterexample $(G, \Sigma)$ to Theorem~\ref{thm:main-signed} with $\abs{E(G)}$ minimum.

  \begin{LEM}\label{odd2sep}
    A minimum counterexample $(G,\Sigma)$ cannot have an odd $2$-separation $(G_1, G_2)$ where each
    $(G_i,E(G_i)\cap \Sigma)$ is connected and not a signed subgraph
    of $\tilde{K_{2}^{2}}$.
  \end{LEM}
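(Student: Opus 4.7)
The plan is to derive a contradiction by splitting $(G,\Sigma)$ along the odd $2$-separation into two smaller signed graphs, applying induction to each (or handling a trivial base case), and gluing the resulting decompositions together. Let $\{u,v\}$ be the boundary of $(G_1,G_2)$. Since the separation is odd, both $\deg_{G_i}(u)$ and $\deg_{G_i}(v)$ are odd (using Lemma~\ref{parity}). For each $i$, let $G_i'$ be $G_i$ together with a new edge $e_i$ between $u$ and $v$; each $G_i'$ is then loopless and Eulerian. A standard argument using the $2$-connectedness of $G$ shows $G_i-u$ and $G_i-v$ are connected and that any cut vertex of $G_i$ different from $u,v$ would be a cut vertex of $G$, so $G_i'$ is $2$-connected (or consists of parallel $uv$-edges when $\abs{V(G_i)}=2$, a case that can be handled directly by pairing edges of like sign). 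The hypothesis that $(G_i,\Sigma\cap E(G_i))$ is not a signed subgraph of $\tilde{K_2^2}$, combined with the odd-degree condition at the boundary, forces $\abs{E(G_i)}\ge 2$, hence $\abs{E(G_i')}<\abs{E(G)}$.

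Write $a_i=\abs{\Sigma\cap E(G_i)}$; since $a_1+a_2=\abs{\Sigma}$ is even, $a_1\equiv a_2\pmod 2$. Declare the sign of each $e_i$ to be $a_1\bmod 2$ and set $\Sigma_i'=(\Sigma\cap E(G_i))\cup\{e_i\text{ if odd}\}$, so both $\abs{\Sigma_i'}$ are even. To verify $(G_i',\Sigma_i')$ is odd-$K_4$-minor free, I realize it as a signed minor of $(G,\Sigma)$: pick a $u$-$v$ path $P$ in $G_{3-i}$ and an edge $e^*\in E(P)$, apply Lemma~\ref{resign} to re-sign so that $E(P)\setminus\{e^*\}$ is all even, then delete $E(G_{3-i})\setminus E(P)$ with the resulting isolated vertices and contract $E(P)\setminus\{e^*\}$. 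The outcome is $G_i$ with a new edge $uv$, and a short calculation using the Eulerianness of $G_i'$ (in which every cut has even size, making $\abs{\Sigma_i'}\pmod 2$ a well-defined invariant of the minor) shows that this parity equals $a_i+p(P)\pmod 2$, where $p(P)$ is the number of odd edges on $P$ modulo $2$; so we must arrange $p(P)\equiv a_1\pmod 2$.

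Matching these path parities is the main obstacle. If $(G_{3-i},\Sigma\cap E(G_{3-i}))$ is non-bipartite and connected, then $u$-$v$ paths of both parities exist (this follows from the connectedness of the signed double cover); if it is bipartite, re-signing $(G,\Sigma)$ so that every edge of $G_{3-i}$ is even forces $a_{3-i}=0$, hence $a_1\equiv 0\pmod 2$, and every $u$-$v$ path in $G_{3-i}$ then has parity $0\equiv a_1$. Applying this observation on whichever side is bipartite lets us pick both $P$ (in $G_2$) and $P'$ (in $G_1$) with the required parity, \emph{unless} both $(G_i,\Sigma\cap E(G_i))$ are bipartite. In that last case every cycle of $G$ is either contained in some $G_i$ (hence even) or is a union of a $u$-$v$ path in $G_1$ with one in $G_2$; in particular every cycle of $G-u$ is even, so $(G,\Sigma)$ is almost bipartite and by Proposition~\ref{almostbipartite} is even cycle decomposable, contradicting that $(G,\Sigma)$ is a counterexample.

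Finally, by the minimality of $(G,\Sigma)$, each $(G_i',\Sigma_i')$ admits an even cycle decomposition $\mathcal{C}_i$. Let $C_i\in\mathcal{C}_i$ be the even cycle containing $e_i$; then $C_i\setminus\{e_i\}$ is a $u$-$v$ path in $G_i$ whose parity equals the common sign of $e_1$ and $e_2$. Consequently $(C_1\setminus\{e_1\})\cup(C_2\setminus\{e_2\})$ is an even cycle of $G$, and substituting it for $C_1,C_2$ in $\mathcal{C}_1\cup\mathcal{C}_2$ yields an even cycle decomposition of $G$, the desired contradiction.
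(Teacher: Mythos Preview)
Your overall strategy matches the paper's: split along the odd $2$-separation, add an edge of the correct parity on each side to make two smaller Eulerian $2$-connected signed graphs, realize each as a proper signed minor of $(G,\Sigma)$, apply minimality, and glue the two leftover paths into an even cycle. The organization differs slightly---the paper, when one side (say $G_2$) is bipartite, does not form $G_2'$ at all but instead decomposes $G_2$ directly as a family of even cycles together with one even $u$-$v$ path; you instead form $G_i'$ on both sides and add the extra case ``both sides bipartite $\Rightarrow$ almost bipartite''. Both routes work.

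There is, however, a genuine gap in your justification that a non-bipartite connected $(G_{3-i},\Sigma\cap E(G_{3-i}))$ contains $u$-$v$ paths of both parities. Connectedness of the signed double cover gives $u$-$v$ \emph{walks} of both parities, not paths: when you reduce a walk to a path by excising closed subwalks, each excised odd cycle flips the parity. A concrete obstruction is a ``lollipop'': an even edge $uv$ together with an odd triangle attached at $v$. This signed graph is connected and non-bipartite, its double cover is connected, yet the only $u$-$v$ path is the single even edge. Of course this particular example cannot sit inside our $G$ (it would make $v$ a cut-vertex), and indeed the claim \emph{is} true here---but it needs the $2$-connectedness of $G$, not just the connectedness of $G_{3-i}$. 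The paper's fix is to take an odd cycle $C$ in $G_{3-i}$ and use Menger's theorem (in $G$) to find two vertex-disjoint paths from $\{u,v\}$ to $V(C)$; these paths together with the two arcs of $C$ give $u$-$v$ paths of both parities. Equivalently, one can argue that the block tree of $G_{3-i}$ is a path with $u$ and $v$ in the end blocks, and that any non-bipartite block (being $2$-connected) admits paths of both parities between its two attachment vertices.

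A smaller point: your ``parity calculation'' verifies only that the minor you construct has a signature of the same parity as $\Sigma_i'$, which by itself does not imply the signatures are equivalent. What actually makes the argument work is that the re-signing you perform in $G$ restricts to a re-signing of $G_i$, so the minor's signature agrees with $\Sigma\cap E(G_i)$ on $E(G_i)$ up to equivalence, and differs from $\Sigma_i'$ by at most $\{e_i\}$; since $e_i$ is not a bridge in $G_i'$, this difference is a cut precisely when $p(P)\equiv a_1\pmod 2$. You should make this explicit rather than appealing to parity alone.
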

  \begin{proof}

    Suppose that $(G,\Sigma)$ has an odd  $2$-separation   $(G_1, G_2)$ such that each $(G_i,E(G_i)\cap \Sigma)$ is connected and not a signed subgraph
    of $\tilde{K_{2}^{2}}$.
    Let $u$ and $v$ be the vertices on the
  boundary of $(G_1,G_2)$.  By assumption, $\deg_{G_1} (u)$ and $\deg_{G_1} (v)$ are odd.

We first handle the subcase that one of the $(G_i,E(G_i)\cap \Sigma)$,
say $(G_2,E(G_2)\cap \Sigma)$ is
balanced.  In this case, by Lemma~\ref{bipartite}, we may assume that
all edges in $G_2$ are positive
and thus $\Sigma\subseteq E(G_1)$.
Let $(G_1',\Sigma)$ be the signed
graph obtained from $(G_1,\Sigma)$ by adding a positive  edge between $u$
and $v$.  Note that $G_1'$ is $2$-connected since $G$ is
$2$-connected.  Moreover, $(G_1',\Sigma)$ is a proper minor of
$(G,\Sigma)$ since there is a balanced path between $u$ and $v$ in
$(G_2,\emptyset)$.
(Note that every path in $(G_2,\emptyset)$ is balanced because no edge is negative.)
By assumption, $(G_1',\Sigma)$ is not a counterexample and therefore
$(G_1',\Sigma)$ has a balanced-cycle decomposition.  It follows that
$E(G_1)$ can be decomposed as $\mathcal{C}_1 \cup \{P_1\}$, where
$\mathcal{C}_1$ is a family of balanced cycles and $P_1$ is a balanced path
between $u$ and $v$.  Since $(G_2,\emptyset)$ has no negative edges, and
$u$ and $v$ are the only odd degree vertices in $G_2$,
we can decompose $E(G_2)$ as $\mathcal{C}_2 \cup \{P_2\}$ where $\mathcal{C}_2$ is a family of balanced cycles, and $P_2$ is a balanced path between $u$ and $v$.  Therefore $\mathcal{C}_1 \cup \mathcal{C}_2 \cup \{P_1 \cup P_2\}$ is a balanced-cycle decomposition of $(G, \Sigma)$.

The other subcase is if neither $(G_1,E(G_1)\cap \Sigma)$ nor
$(G_2,E(G_2)\cap \Sigma)$ is balanced.  Let
$(G_i',\Sigma_i)$ be the signed graph obtained from $(G_i,E(G_i)\cap \Sigma)$ by adding a positive edge
between $u$ and $v$ if $\abs{\Sigma \cap E(G_i)}$ is even, and adding a
negative edge between $u$ and $v$ if $\abs{\Sigma \cap E(G_i)}$ is odd.  Since
$(G_1,E(G_1)\cap \Sigma)$ and $(G_2,E(G_2)\cap \Sigma)$ are
not balanced and $G$ is $2$-connected, by Menger's theorem
we can find two vertex-disjoint paths from $u$ and $v$ to an unbalanced cycle in
$(G_i,E(G_i)\cap\Sigma)$ for each $i\in \{1,2\}$. Therefore, both $(G_1',\Sigma_1)$ and $(G_2',\Sigma_2)$ are proper minors of $(G,
\Sigma)$.  By the minimality assumption, $E(G_i)$ can be decomposed into
$\mathcal{C}_i \cup \{P_i\}$ where $\mathcal{C}_i$ is a family of balanced
cycles and $P_i$ is a path between $u$ and $v$.  Again, $\mathcal{C}_1
\cup \mathcal{C}_2 \cup \{P_1 \cup P_2\}$ is a balanced-cycle decomposition of $(G, \Sigma)$.
\end{proof}

We call a signed subgraph $(H, \Gamma)$ of $(G, \Sigma)$ an \emph{albatross} if $(H, \Gamma) \cong \tilde K_2^2 \cdot \tilde K_2^2$
(see Figure~\ref{fig:k32}) and the degree-$4$ vertex in $H$ also has degree $4$ in $G$.

\begin{LEM} \label{albatrosses}
In a minimum counterexample $(G, \Sigma)$, all albatrosses are edge-disjoint.  Moreover, if $G$ has maximum degree $4$, then all albatrosses are vertex-disjoint.
\end{LEM}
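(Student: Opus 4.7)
My plan is to prove both parts by contradiction from the minimality of $(G,\Sigma)$. The key observation driving part (a) is that, since every edge of an albatross is incident to its center, any edge $f$ shared by two albatrosses $H_1,H_2$ with centers $b_1,b_2$ must have both endpoints in $\{b_1,b_2\}$. Hence either $b_1=b_2$---and then the unique way to partition the four edges at the degree-$4$ vertex $b_1$ into two parallel pairs with one odd and one even edge each forces $H_1=H_2$---or $b_1\ne b_2$ and $f=b_1b_2$. In the latter case the parity partner of $f$ is also shared, so $H_1$ and $H_2$ share a full $\tilde K_2^2$ between $b_1$ and $b_2$; since $\deg_G(b_i)=4$, each $b_i$ sends a further $\tilde K_2^2$ to a third vertex $c_i$ and has no other edges.

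Next I handle the sub-case $c_1=c_2$ directly: either $G=H_1\cup H_2\cong \tilde{K_3^2}$, which has three odd edges and so violates the parity hypothesis, or $c_1$ is a cut vertex separating $\{b_1,b_2\}$ from the remainder of $G$, contradicting $2$-connectedness. In the main sub-case $c_1\ne c_2$, the pair $\{c_1,c_2\}$ is an even $2$-separation with $G_1=H_1\cup H_2$ inside. I reduce by replacing $G_1$ with a fresh $\tilde K_2^2$ between $c_1$ and $c_2$; this $(G',\Sigma')$ is a minor of $G$ (obtained by contracting one even edge in each outer $\tilde K_2^2$ and deleting the resulting loops), hence odd-$K_4$-minor-free, and it remains Eulerian, loopless, and has an even number of odd edges, while being strictly smaller than $G$. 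By minimality, $(G',\Sigma')$ admits an even cycle decomposition $\mathcal{D}$. The two new parallel edges at $\{c_1,c_2\}$ cannot share a cycle of $\mathcal{D}$ (their union is an odd $2$-cycle), so they lie in distinct even cycles; deleting them produces an even $c_1$-$c_2$ path $P_e$ and an odd $c_1$-$c_2$ path $P_o$ in $G_2$. Since $G_1$ has three odd edges, its two complementary $c_1$-$c_2$ paths have opposite parities, and pairing them with $P_e$ and $P_o$ respectively yields two even cycles; together with the rest of $\mathcal{D}$ this gives an even cycle decomposition of $G$, contradicting the choice of $(G,\Sigma)$.

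For part (b), I will reduce every case to part (a). If the shared vertex $u$ is the center of both albatrosses, the uniqueness argument from part (a) forces $H_1 = H_2$. If $u$ is the center of exactly one, say $H_1$, then the other albatross has a $\tilde K_2^2$ at $u$ consisting of two parallel edges to its center $b_2$; since $\deg_G(u)=4$, these two edges must be two of the four edges of $H_1$ at $u$, forcing $b_2 \in \{a_1,c_1\}$ and exhibiting two shared edges, contradicting part (a). If $u$ is a non-center of both and the centers coincide, uniqueness again yields $H_1 = H_2$. Finally, if $u$ is a non-center of both with distinct centers $b_1 \ne b_2$, then the maximum-degree-$4$ hypothesis forces $u$ to have degree exactly $4$, and its four edges---two parallel to $b_1$ and two parallel to $b_2$, each pair having one even and one odd edge---form an albatross $H_3$ centered at $u$ that shares its two $u b_1$ edges with $H_1$, again contradicting part (a).

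I expect the main obstacle to be verifying $2$-connectedness of the reduced graph $G'$ in part (a). Since $G'$ equals $G_2$ together with two new parallel edges between $c_1$ and $c_2$, one must rule out new cut vertices: for any candidate cut vertex $v \notin \{c_1,c_2\}$, every component of $G_2-v$ must already touch $\{c_1,c_2\}$---otherwise $v$ would have been a cut vertex of $G$---and then the two new parallel edges glue the $c_1$- and $c_2$-containing components of $G_2-v$ together in $G'$; the cases $v \in \{c_1,c_2\}$ are analogous via $2$-connectedness of $G$.
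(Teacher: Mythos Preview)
Your proof is correct and follows essentially the same approach as the paper's: identify that two edge-sharing albatrosses must have distinct centers joined by a shared $\tilde K_2^2$, rule out the $\tilde K_3^2$ configuration via $2$-connectedness and parity, and in the remaining case $H_1\cup H_2\cong \tilde K_2^2\cdot\tilde K_2^2\cdot\tilde K_2^2$ replace this chain by a single $\tilde K_2^2$, apply minimality, and lift the resulting even cycle decomposition back. Your treatment is more detailed than the paper's (which asserts the lifting and $2$-connectedness of $G'$ without elaboration, and dispatches part (b) with the single sentence ``The second part of the lemma follows in the same way''), but the underlying ideas are the same; in particular, your reduction of part (b) to part (a) via the auxiliary albatross $H_3$ centered at the shared non-center vertex is exactly the kind of argument the paper is gesturing at.
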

\begin{figure}
\centering
\tikzstyle{every node}=[circle,draw,fill=black!50,inner
  sep=0pt,minimum width=4pt]
  \begin{tikzpicture}
    \draw [bend left] (-1,0) node (v1) {} to (0,0) node (v2) {} to (1,0) node (v3) {} to (2,0) node (v4) {};
    \draw [dashed,bend right] (v1) to (v2) to (v3) to (v4);
    \end{tikzpicture}
\caption{$\tilde K_2^2 \cdot \tilde K_2^2 \cdot \tilde K_2^2$. (Solid
  lines denote negative edges.)}
\label{fig:k23}
\end{figure}
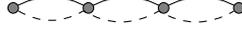
\begin{proof}
Towards a contradiction, let $(H_1, \Sigma_1)$ and $(H_2, \Sigma_2)$ be distinct albatrosses of $(G, \Sigma)$ that are not edge-disjoint. Since $G$ is $2$-connected, it is not possible that $(H_1 \cup H_2, \Sigma_1 \cup \Sigma_2) \cong \tilde K_{3}^2$ (see Figure~\ref{fig:k32}). Thus $(H_1 \cup H_2, \Sigma_1 \cup \Sigma_2) \cong \tilde K_2^2 \cdot \tilde K_2^2 \cdot \tilde K_2^2$ (see Figure~\ref{fig:k23}) and  $H_1 \cup H_2$ attaches to the rest of $G$ at the degree-$2$ vertices of $H_1 \cup H_2$.  Let $(G', \Sigma')$ be the signed graph obtained from $(G, \Sigma)$ by replacing $(H_1 \cup H_2, \Sigma_1 \cup \Sigma_2)$ with a single $\tilde K_2^2$.  Note that $(G', \Sigma')$ is a $2$-connected proper minor of $(G, \Sigma)$.   By the minimality assumption, we have that $(G', \Sigma')$ has a balanced-cycle decomposition $\mathcal{C}'$, which we can lift to a balanced-cycle decomposition $\mathcal{C}$ of $(G, \Sigma)$.  The second part of the lemma follows in the same way.
\end{proof}

We call $(G, \Sigma)$ a \emph{Bermuda triangle} if all necklace decompositions $B_1, \dots, B_n$ of $(G, \Sigma)$ satisfy $n=3$ with two $(B_i, E(B_i) \cap \Sigma)$ isomorphic to $\tilde K_2^2$, see Figure~\ref{bermuda}.  Note that every Bermuda triangle contains an albatross.
\begin{figure}
\centering
\tikzstyle{v}=[circle,draw,fill=black!50,inner
  sep=0pt,minimum width=4pt]
\begin{tikzpicture}
\draw [bend left] (0,1) node [v] (v1) [label=above: $u$]{} to (1,0) node[v] (v2) {}to (0,-1) node[v] (v3) [label=below: $v$]{};
\draw [dashed, bend right] (v1) to  (v2)to (v3);
\fill [black!30,  circular glow={fill=black!10}]  (v1)  to [out=140,in=90]  (-1,0)  to
[out=-90,in=220] (v3) to [out=120,in=220] (v1);
\node  at (-.75,0) {$B_1$};
\end{tikzpicture}
\caption{A \emph{Bermuda triangle}. Note that the right half is an albatross. (Solid lines denote negative edges.)} 
\end{figure}
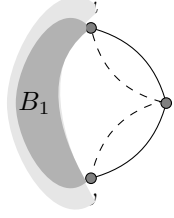

\begin{LEM}\label{even2sep}
    A minimum counterexample $(G,\Sigma)$ cannot have an even
    $2$-separation $(G_1, G_2)$
    such that each
    $(G_i,E(G_i)\cap \Sigma)$ is connected and
    not isomorphic to
    $\tilde{K_{2}^{2}}$,
    unless $(G, \Sigma)$ is a Bermuda triangle.
\end{LEM}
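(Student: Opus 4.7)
The plan is to apply Lemma~\ref{bead} to the even $2$-separation $(G_1,G_2)$ to obtain a necklace decomposition $B_1,\ldots,B_n$ with $n\ge 2$, and then to derive a contradiction by constructing a minor of $(G,\Sigma)$ on which the minimality assumption applies, except when the structure of the necklace forces $(G,\Sigma)$ to be a Bermuda triangle. For each bead $B_i$, let $\{a_i,b_i\}$ denote its boundary and set $p_i=\abs{\Sigma\cap E(B_i)}\pmod 2$; by Lemma~\ref{parity}, $\sum_i p_i\equiv 0\pmod 2$.

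For a chosen bead $B_i$ with at least three vertices, I would construct the minor $M_i$ by selecting two edge-disjoint $a_i$-$b_i$-paths $P,P'$ in $\bigcup_{j\ne i}B_j$, re-signing via Lemma~\ref{resign} so that all but one edge of each of $P$ and $P'$ is even, and contracting the even edges. The result is $B_i$ together with two parallel edges $e,e'$ between $a_i$ and $b_i$, with $p(e)+p(e')$ equal to the parity of the cycle $P\cup P'$ in $(G,\Sigma)$ (this parity being re-signing invariant). To make $\abs{\Sigma\cap E(M_i)}$ even I require $p(e)+p(e')\equiv p_i\pmod 2$. Provided this can be arranged, $M_i$ is a proper $2$-connected Eulerian loopless odd-$K_4$-minor free signed graph with an even number of odd edges (two-vertex beads are disposed of separately by pairing same-parity edges into $2$-cycles), so minimality gives an even cycle decomposition $\mathcal D$ of $M_i$. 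Let $C_e,C_{e'}\in\mathcal D$ be the cycles containing $e,e'$. When $C_e\ne C_{e'}$---which is automatic if $p(e)\ne p(e')$, and generically arrangeable otherwise by choosing $\mathcal D$ appropriately---replacing $e$ by $P$ and $e'$ by $P'$ lifts $C_e,C_{e'}$ to two simple even cycles in $(G,\Sigma)$; the other cycles of $\mathcal D$ lie inside $B_i$, and the leftover $\bigcup_{j\ne i}B_j\setminus(P\cup P')$ is an Eulerian signed subgraph whose $2$-connected blocks carry, by a short parity calculation, an even number of odd edges, and can be decomposed by minimality. Stitching these decompositions together yields an even cycle decomposition of $(G,\Sigma)$, contradicting that it is a counterexample.

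The main obstacle is to show that this reduction succeeds for some choice of bead $B_i$ unless $(G,\Sigma)$ is a Bermuda triangle. The parity condition $p(e)+p(e')\equiv p_i$ fails only when the chain $\bigcup_{j\ne i}B_j$ admits no edge-disjoint $a_i$-$b_i$-path pair with the required cycle parity. Each intermediate bead isomorphic to $\tilde K_2^2$ forces a parity $1$ contribution to $P\cup P'$ (its two edges must be split, one into each path), while non-$\tilde K_2^2$ beads are flexible. Using Lemma~\ref{albatrosses} to bound runs of consecutive $\tilde K_2^2$ beads, a case analysis on $n$ and on how many beads are $\cong\tilde K_2^2$ shows that the only blocking configuration is $n=3$ with exactly two beads isomorphic to $\tilde K_2^2$: in that case the chain relative to the third bead is exactly $\tilde K_2^2\cdot\tilde K_2^2$, whose only edge-disjoint $a_i$-$b_i$-path pair uses all four edges and forms a cycle that decomposes only into two odd $2$-cycles, blocking both the parity match and the lift. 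This is precisely the Bermuda triangle, completing the dichotomy.
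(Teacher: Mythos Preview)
Your plan diverges from the paper's argument and has two genuine gaps. First, the claim that the $2$-connected blocks of the leftover $\bigcup_{j\ne i}B_j\setminus(P\cup P')$ each carry an even number of odd edges is unjustified: the leftover has even \emph{total} parity (since $B_i\cup P\cup P'$ is a union of even cycles), but the paths $P,P'$ thread through each intermediate bead $B_j$ arbitrarily, and what remains inside a single $B_j$ after their removal---possibly in several pieces---need not have even parity block by block. Without this you cannot invoke minimality on the leftover, and the reduction collapses. Second, when $p(e)=p(e')$ your assertion that $C_e\ne C_{e'}$ is ``generically arrangeable'' is unsupported: if $(B_i,\Sigma\cap E(B_i))$ happens to be even cycle decomposable on its own, then $\{e,e'\}$ is a legitimate even $2$-cycle in some decomposition of $M_i$, and you give no mechanism to avoid it.

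The paper sidesteps both problems with a symmetric split. Rather than isolating one bead and treating the rest as paths-plus-leftover, it uses the necklace to choose an even $2$-separation $(H_1,H_2)$ in which \emph{each} side contains an odd bead and neither side is isomorphic to $\tilde K_2^2$; the only configuration obstructing such a choice is exactly the Bermuda triangle. One then adds two $x$-$y$ edges of suitable parities to each $H_i$ and applies minimality to $H_i'$. The odd bead inside $H_i$ guarantees that $(H_i,E(H_i)\cap\Sigma)$ is not even cycle decomposable, so in any decomposition of $H_i'$ the two new edges cannot form a $2$-cycle and must lie in distinct cycles---precisely what your ``generically arrangeable'' step was missing. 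Stripping the new edges then yields two $x$-$y$ paths on each side; pairing them across the separation produces even cycles covering all of $E(G)$, with no leftover to worry about.
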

\begin{proof}
  Suppose that $(G,\Sigma)$ has an even $2$-separation   $(G_1, G_2)$
  such that each $(G_i,E(G_i)\cap \Sigma)$
  is connected and
  not isomorphic to $\tilde{K_{2}^{2}}$  and that $(G, \Sigma)$ is not a Bermuda
  triangle.
  By Lemma~\ref{bead}, $(G, \Sigma)$ has a necklace decomposition
  $B_1, \dots, B_n$.  If each bead $B_i$ contains an even number of
  negative edges, we contradict the fact that $(G, \Sigma)$ is a minimum
  counterexample.  Therefore, there are at least two beads with an odd
  number of negative
  edges.  Call such a bead an \emph{odd bead}.  Moreover, since $(G,
  \Sigma)$ is not a Bermuda triangle, we may assume that $n \neq 3$ or
  $n=3$ and at most one $(B_i, E(B_i) \cap \Sigma) \cong \tilde
  K_2^2$.  It follows that we may choose an even $2$-separation $(H_1,
  H_2)$ of $(G, \Sigma)$
  such that both  $(H_1,E(H_1)\cap \Sigma)$ and $(H_2,E(H_2)\cap \Sigma)$
  contain an odd bead and are not %
  isomorphic to $\tilde{K}_2^2$.
  This also implies that $\abs{E(H_1)}, \abs{E(H_2)}>2$ because odd beads
  that are not isomorphic to $\tilde{K}_2^2$ contain at least three edges.

Let $x$ and $y$ be the vertices on the boundary of $(H_1, H_2)$.  We
first handle the subcase that $H_1$ and $H_2$ contain an odd number of
negative edges.  In this case we define $(H_i',\Sigma_i)$ to be the signed
graph obtained from $(H_i,E(H_i)\cap \Sigma)$ by adding
 a positive edge and a negative edge between $x$ and $y$.  Since each $H_i$
 contains an odd bead, there exist an unbalanced $x$-$y$-path and a balanced
 $x$-$y$-path in $(H_i,E(H_i)\cap \Sigma)$.  Therefore,
 $(H_i',\Sigma_i)$ is odd-$K_4$-minor-free and $2$-connected.  By the
 minimality assumption, $E(H_i)$ can be decomposed as $\mathcal{C}_i
 \cup \{E_i\} \cup \{O_i\}$, where $\mathcal{C}_i$ is a family of balanced
 cycles, $E_i$ is a balanced $x$-$y$-path and $O_i$ is an unbalanced
 $x$-$y$-path.  But then $\mathcal{C}_1 \cup \mathcal{C}_2 \cup \{E_1
 \cup E_2\} \cup \{O_1 \cup O_2\}$ is a balanced-cycle decomposition of
 $(G, \Sigma)$.

 The other remaining subcase is when
 $(H_1,E(H_1)\cap\Sigma)$ and $(H_2,E(H_2)\cap \Sigma)$
 contain an even number of negative edges.  In this case we let $(H_i',\Sigma_i)$ be
 the graph obtained from $(H_i,E(H_i)\cap \Sigma)$ by adding two positive edges between $x$
 and $y$. Again note that $(H_i',\Sigma_i)$ is $2$-connected and
 odd-$K_4$-minor-free.  By the minimality assumption, $(H_i',\Sigma_i)$ is balanced-cycle
 decomposable.  Moreover, since $(H_i,E(H_i)\cap \Sigma)$ is \emph{not} balanced-cycle
 decomposable (by virtue of containing an odd bead), the balanced
 $2$-cycle formed by the two newly
 added edges cannot be used as a cycle in the decomposition.
Therefore, $E(H_i)$ can be decomposed as $\mathcal{C}_i \cup \{E_{i,1}
,E_{i,2} \}$ where $\mathcal{C}_i$ is a family of balanced cycles
and $E_{i,1}$ and $E_{i,2}$ are balanced $x$-$y$-paths.    But then
$\mathcal{C}_1 \cup \mathcal{C}_2 \cup \{E_{1,1} \cup E_{2,1},
E_{1,2} \cup E_{2,2}\}$ is a balanced-cycle decomposition of $(G,
\Sigma)$.
\end{proof}

\begin{LEM}\label{del2-cycle}
  A minimum counterexample $(G,\Sigma)$ cannot contain two parallel
  edges of the same sign.
\end{LEM}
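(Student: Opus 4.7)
The plan is to set up an even $2$-separation around the parallel edges and invoke Lemma~\ref{even2sep}. Suppose for contradiction that the minimum counterexample $(G,\Sigma)$ contains parallel edges $e_1,e_2$ of the same sign joining vertices $u$ and $v$. Note that $\{e_1,e_2\}$ is itself an even $2$-cycle.

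The first step is the trivial case $V(G)=\{u,v\}$: then $G$ consists only of parallel edges between $u$ and $v$, and the Eulerian and even-parity hypotheses force both the number of even edges and the number of odd edges to be even, so one obtains an even cycle decomposition by pairing same-sign edges into even $2$-cycles, contradicting the counterexample assumption. So assume $\abs{V(G)}\geq 3$. Set $G_1=(\{u,v\},\{e_1,e_2\})$ and $G_2=G-\{e_1,e_2\}$. Since $\deg_{G_1}(u)=\deg_{G_1}(v)=2$, the pair $(G_1,G_2)$ is an even $2$-separation, and $(G_1,E(G_1)\cap\Sigma)$ is connected and not isomorphic to $\tilde K_2^2$ because $e_1$ and $e_2$ share a sign. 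Using the $2$-connectedness of $G$, the graph $G_2$ must be connected (otherwise the partition of $V(G)\setminus\{u,v\}$ coming from a disconnection would force a cut vertex at $u$ or $v$), and the case $(G_2,E(G_2)\cap\Sigma)\cong\tilde K_2^2$ would force $V(G)=\{u,v\}$ and hence reduce to the trivial case. Thus Lemma~\ref{even2sep} applies, forcing $(G,\Sigma)$ to be a Bermuda triangle.

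The remainder of the argument is a direct structural analysis of the Bermuda triangle case. Since every bead of a necklace decomposition has at least two edges and $G_1$ has exactly two, $B_1=G_1$ must itself be a single bead, and $B_1\not\cong\tilde K_2^2$ by our sign assumption. The Bermuda triangle definition therefore forces the other two beads $B_2$ and $B_3$ to both be copies of $\tilde K_2^2$, pinning $(G,\Sigma)$ down completely: three vertices $u,v,w$ with $e_1,e_2$ between $u$ and $v$, and, between each of $\{u,w\}$ and $\{w,v\}$, one even and one odd edge. A short case analysis on the common sign of $e_1,e_2$ then produces two edge-disjoint triangles $u$-$w$-$v$-$u$, each with an even number of odd edges, together using all six edges. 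This even cycle decomposition contradicts the counterexample assumption and completes the proof.

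The main obstacle is the Bermuda triangle exception built into Lemma~\ref{even2sep}: the lemma gives no direct conclusion there, so we must explicitly pin down the rigid three-vertex, six-edge structure it imposes and verify by hand that it always admits an even cycle decomposition.
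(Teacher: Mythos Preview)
Your proof is correct and follows essentially the same route as the paper's: isolate the two same-sign parallel edges as one side of an even $2$-separation, apply Lemma~\ref{even2sep} to force the Bermuda triangle structure, and then check directly that the resulting six-edge signed graph decomposes into two even triangles. The only cosmetic differences are that the paper swaps the labels $G_1,G_2$, handles the degenerate case via the parity of $\abs{\Sigma}$ (rather than treating $\abs{V(G)}=2$ separately), and invokes Lemma~\ref{bead} explicitly before citing Lemma~\ref{even2sep}; your argument that the two-edge side must be a single bead is the same observation phrased from the other direction.
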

\begin{proof}
  Let $G_2$ be a connected subgraph of $G$ having exactly two parallel
  edges of the same sign.
  Let $G_1=G\setminus E(G_2)$. Since $G$ is $2$-connected, $G_1$ is connected.
  By Lemma~\ref{bead}, there is a necklace decomposition
  $B_1,B_2,\ldots,B_n$ extending the even $2$-separation $(G_1, G_2)$.
  We may assume that $B_n=G_2$.
  Since $\abs{\Sigma}$ is even, $G_1$ is not isomorphic to $\tilde K_2^2$.
  Lemma~\ref{even2sep} implies that $G$ is a Bermuda triangle
  and therefore $n=3$ and $B_1$, $B_2$ are
  isomorphic to $\tilde K_2^2$.  But then, $(G,\Sigma)$ is easily seen to be decomposable
  into two balanced $3$-cycles.
\end{proof}
We say that a signed graph $(G, \Sigma)$ is \emph{almost
  $3$-connected} if for all proper $2$-separations $(G_1, G_2)$ of $G$,
$(G_1, E(G_1) \cap \Sigma)$ or $(G_2, E(G_2) \cap \Sigma)$ is
isomorphic to $\tilde K_2^2 \cdot \tilde K_2^2$.

\begin{LEM} \label{bermudaconnected}
  A minimum counterexample $(G,\Sigma)$ is almost $3$-connected.
\end{LEM}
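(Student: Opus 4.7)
The plan is to argue by contradiction.  Suppose $(G,\Sigma)$ admits a proper $2$-separation $(G_1,G_2)$ in which neither $(G_i,E(G_i)\cap \Sigma)$ is isomorphic to $\tilde K_2^2 \cdot \tilde K_2^2$; I will derive a contradiction.  As a preparation, I would observe that since $G$ is $2$-connected and the separation is proper, both $G_1$ and $G_2$ must be connected (otherwise a component of some $G_i$ meeting only one boundary vertex would force that boundary vertex to be a cut vertex of $G$).  Moreover, neither $(G_i,E(G_i)\cap \Sigma)$ can be a signed subgraph of $\tilde K_2^2$: any such subgraph has its entire vertex set contained in the boundary, forcing $V(G_i)\setminus V(G_j)=\emptyset$ and contradicting properness.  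In particular, neither is isomorphic to $\tilde K_2^2$ itself.

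Next I would dispose of the odd case: if $(G_1,G_2)$ is an odd $2$-separation, Lemma~\ref{odd2sep} immediately yields a contradiction.  Hence $(G_1,G_2)$ is an even $2$-separation, and since neither $(G_i,E(G_i)\cap \Sigma)$ is isomorphic to $\tilde K_2^2$, Lemma~\ref{even2sep} forces $(G,\Sigma)$ to be a Bermuda triangle.

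Finally I would extend $(G_1,G_2)$ to a necklace decomposition $B_1,\dots,B_n$ via Lemma~\ref{bead}; condition~(v) of that lemma supplies an index $k$ with $G_1=B_1\cup \dots \cup B_k$ and $G_2=B_{k+1}\cup \dots \cup B_n$.  The Bermuda-triangle hypothesis forces $n=3$ and two of $B_1,B_2,B_3$ to be isomorphic to $\tilde K_2^2$.  By swapping $G_1$ and $G_2$ if necessary I may assume $k=2$, so that $G_1=B_1\cup B_2$ and $G_2=B_3$.  If both $B_1$ and $B_2$ are $\tilde K_2^2$, then Lemma~\ref{bead}(iv) shows $G_1\cong \tilde K_2^2 \cdot \tilde K_2^2$, contradicting our assumption on $(G_1,G_2)$; otherwise $B_3\cong \tilde K_2^2$, so $G_2$ has both its vertices on the boundary, again contradicting properness.

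The main difficulty here is not in any deep argument but in the bookkeeping: I need to verify that at each step properness of $(G_1,G_2)$ is precisely what unlocks the ``not a signed subgraph of $\tilde K_2^2$'' and ``not isomorphic to $\tilde K_2^2$'' hypotheses required to invoke Lemmas~\ref{odd2sep} and~\ref{even2sep} in their intended forms, and that the necklace extension from Lemma~\ref{bead} always produces a description of $(G_1,G_2)$ compatible with the three-bead Bermuda structure.
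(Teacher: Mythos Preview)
Your proof is correct and follows essentially the same route as the paper's: verify that each side of a proper $2$-separation is connected and not a signed subgraph of $\tilde K_2^2$, apply Lemmas~\ref{odd2sep} and~\ref{even2sep} to force $(G,\Sigma)$ to be a Bermuda triangle, and then use the necklace extension from Lemma~\ref{bead} to pin one side down as $\tilde K_2^2\cdot\tilde K_2^2$. The differences are cosmetic: the paper phrases the argument directly rather than by contradiction and absorbs your final case split into a single ``we may assume,'' while you spell out a bit more carefully why the hypotheses of Lemmas~\ref{odd2sep} and~\ref{even2sep} are met.
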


\begin{proof}
  Let $(G_1, G_2)$ be a proper $2$-separation of $G$.
  Since $G$ is $2$-connected, $\abs{E(G_1)}, \abs{E(G_2)}>2$
  and $G_1$, $G_2$ are connected.
  By Lemma~\ref{odd2sep}, $(G_1, G_2)$ must be an
  even $2$-separation.
  By Lemma~\ref{even2sep}, $G$ is a Bermuda triangle.
  Let $B_1,B_2,B_3$ be a necklace decomposition of $G$ extending $(G_1,G_2)$
  given by Lemma~\ref{bead}.
  We may assume that $B_1$, $B_2$ are isomorphic to $\tilde K_2^2$
  and $G_1=B_1\cup B_2$.
  This implies that $(G_1,E(G_1)\cap \Sigma)$ is isomorphic to $\tilde
  K_2^2\cdot \tilde K_2^2$.
\end{proof}
  
\begin{LEM} \label{3-sep}
  A minimum counterexample $(G, \Sigma)$ cannot have a $3$-separation $(G_1, G_2)$, where $(G_2,E(G_2)\cap \Sigma)$ is
  balanced, connected, and has at least $4$ edges.
\end{LEM}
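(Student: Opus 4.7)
Suppose for contradiction that a minimum counterexample $(G,\Sigma)$ admits such a $3$-separation $(G_1,G_2)$. By Lemma~\ref{bipartite}, we re-sign so that every edge of $G_2$ is even, i.e.\ $\Sigma\subseteq E(G_1)$. Let $\{x,y,z\}=V(G_1)\cap V(G_2)$. The proof of Lemma~\ref{parity} applies to any separation, so $\deg_{G_2}(x)+\deg_{G_2}(y)+\deg_{G_2}(z)$ is even, and either (Case~A) all three $G_2$-degrees are even, or (Case~B) exactly two are odd---say those at $x$ and $y$.

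In Case~B, since $G_2$ has all even edges and the two vertices of odd $G_2$-degree are exactly $x$ and $y$, an Eulerian trail argument yields a decomposition $E(G_2)=\mathcal{C}_2\cup\{P_{xy}\}$, where $\mathcal{C}_2$ is a family of even cycles of $G_2$ and $P_{xy}$ is an $x$-$y$ path in $G_2$; both are automatically even since all edges of $G_2$ are even. Set $G'=G_1+e$, where $e$ is a new even edge between $x$ and $y$. Then $(G',\Sigma)$ is Eulerian, loopless, has an even number of odd edges, and is odd-$K_4$-minor free, being a proper minor of $(G,\Sigma)$ obtained by deleting $E(G_2)\setminus E(P_{xy})$ and then contracting all but one edge of the all-even path $P_{xy}$. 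Assuming $(G',\Sigma)$ is $2$-connected, minimality of $(G,\Sigma)$ provides an even cycle decomposition $\mathcal{D}$ of $(G',\Sigma)$. The unique $C\in\mathcal{D}$ containing $e$ splices with $P_{xy}$ to produce an even cycle $C'$, and $(\mathcal{D}\setminus\{C\})\cup\{C'\}\cup\mathcal{C}_2$ is then an even cycle decomposition of $(G,\Sigma)$, contradicting our assumption.

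In Case~A, $(G_2,\emptyset)$ is Eulerian and bipartite, so Proposition~\ref{trivial} gives $E(G_2)=\mathcal{C}_2$ for some family of even cycles. Since $(G_1,\Sigma)$ is Eulerian, loopless, odd-$K_4$-minor free, has an even number of odd edges, and satisfies $\abs{E(G_1)}<\abs{E(G)}$, if $G_1$ is $2$-connected then minimality produces an even cycle decomposition of $(G_1,\Sigma)$, which combined with $\mathcal{C}_2$ yields an even cycle decomposition of $(G,\Sigma)$, a contradiction.

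\textbf{The main obstacle} in both cases is verifying $2$-connectivity of the reduced signed graph $H$ ($H=G'$ in Case~B, $H=G_1$ in Case~A). If $H$ has a cut vertex $v$, the plan is to chase the components of $H-v$ using that $G$ is $2$-connected and $G_2$ is connected with boundary $\{x,y,z\}$; this should exhibit a proper $2$-separation of $G$ of the form $\{v,b\}$ for some $b\in\{x,y,z\}$. By almost $3$-connectivity (Lemma~\ref{bermudaconnected}), one side of this $2$-separation must be isomorphic to $\tilde K_2^2\cdot\tilde K_2^2$, pinning down the local structure as an albatross near $v$. The edge-disjointness properties of albatrosses (Lemma~\ref{albatrosses}) together with the prohibition on parallel same-sign edges (Lemma~\ref{del2-cycle}) should leave only a handful of residual configurations, each of which can be finished either by a direct contradiction or by explicitly writing down an even cycle decomposition of $(G,\Sigma)$. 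Carrying out this enumeration is where I expect the bulk of the remaining technical work to live.
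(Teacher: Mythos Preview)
Your proposal is not a complete proof: you explicitly defer the verification that the reduced graph ($G'$ in Case~B, $G_1$ in Case~A) is $2$-connected, and the sketch you give for handling failures of $2$-connectivity is too vague to count as an argument. In fact, your Case~B reduction is too weak to be salvaged along those lines. Adding only the single edge $xy$ leaves $z$ as a potential cut vertex of $G'=G_1+e$: if $x$ and $y$ lie in the same component of $G_1-z$, then $z$ separates $G'$, and chasing this through almost $3$-connectivity does not obviously terminate in a contradiction (the resulting $2$-separations of $G$ can have boundary $\{z,x\}$ and $\{z,y\}$ with albatrosses on the small sides, and nothing in Lemmas~\ref{albatrosses} or~\ref{del2-cycle} forbids this configuration). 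You also implicitly assume your path $P_{xy}$ avoids $z$; otherwise contracting it merges $z$ into an endpoint and does not produce $G_1+e$. That requires $G_2-z$ connected, which you have not established.

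The paper's proof proceeds differently on both fronts. It first proves $G_2$ is $2$-connected (using almost $3$-connectivity and Lemma~\ref{del2-cycle}), and then, rather than hoping $G_1$ or $G_1+xy$ is $2$-connected, it \emph{adds enough edges to force it}: in the odd case it forms $G_1^{e,f}$ by adding \emph{two} edges $zy$ and $zx$, proves this is $2$-connected, and realises a subdivision of it inside $G$ using two internally disjoint paths in $G_2$ from $z$ to $\{x,y\}$; the leftover, being a subgraph of $G_2$, is bipartite and Eulerian. In the even case it forms $G_1^\Delta$ by adding a full triangle $xy,yz,xz$, shows this is a proper minor of $(G,\Sigma)$ (this needs a cycle in $G_2$ and three disjoint paths to it, again using the $2$-connectivity of $G_2$ and almost $3$-connectivity of $G$), and then analyses three subcases according to how the triangle edges are distributed among the cycles of an even cycle decomposition of $G_1^\Delta$, in each subcase producing matching edge-disjoint paths in $G_2$. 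That last analysis is precisely where the ``bulk of the technical work'' you anticipate actually lives, and it is not the albatross enumeration you propose.
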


\begin{proof}
Choose such a $3$-separation $(G_1, G_2)$ with $\abs{E(G_2)}$ minimum.  By Lemma~\ref{bipartite}, we may assume
that all edges in $(G_2,E(G_2)\cap \Sigma)$ are positive and so
$\Sigma\subseteq E(G_1)$.  In particular, all paths contained in $G_2$
are balanced
and $\tilde K_2^2\cdot \tilde K_2^2$ is not a signed
subgraph of $(G_2,\emptyset)$.  Also note that $G_2$ does not contain parallel edges by Lemma~\ref{del2-cycle}. 
Let $x, y$ and $z$ be the vertices on the
boundary of $(G_1, G_2)$.

We first prove that $G_2$ is $2$-connected.  Suppose not and let
$(H_1,H_2)$ be a proper $1$-separation of $G_2$ with $V(H_1) \cap
V(H_2)=\{w\}$.  If $w \in \{x,y,z\}$ then $H_1$ and $H_2$ both induce
$2$-separations in $G$.  Since $(G, \Sigma)$ is almost $3$-connected and
contains no parallel edges of the same sign, it follows that $H_1$ and
$H_2$ are each just a single edge.  This contradicts $\abs{E(G_2)} \geq
4$.  Thus, $w \notin \{x,y,z\}$ and we may assume $V(H_1) \cap
\{x,y,z\}=\{x\}$.  Thus, $H_1$ induces a $2$-separation in $G$ and
must just be the single edge $xw$.  Hence $G_2 - x$ induces a
$3$-separation in $G$.  This contradicts the minimality of $G_2$
unless, $G_2- x$ has exactly three edges.  On the other hand, recall that $G_2$ contains no parallel edges.   Furthermore, Lemma~\ref{bermudaconnected} implies that no vertex of $G$ has degree $2$ (such a vertex gives a proper $2$-separation with neither side equal to $\tilde K_2^2 \cdot \tilde K_2^2$).  Since $G$ is Eulerian, $G$ has minimum degree at least $4$.  Since $G_2$ contains only four edges, we conclude that $V(G_2) \setminus \{x,y,z\}=\{w\}$.  Since $\deg_{G_2}(w) \geq 4$, $G_2$ must contain parallel edges, which is a contradiciton.

Now by Lemma~\ref{parity}, \[\deg_{G_1}(x)+\deg_{G_1}(y)+\deg_{G_1}(z)\equiv 0\pmod2.\]
There are two possibilities to consider: either two of $x,y$, and $z$
have odd degree in $G_1$ or none of $x,y$, and $z$ have odd degree in
$G_1$.

We handle the former possibility first.  By symmetry, suppose that $\deg_{G_1}(x)$ and $\deg_{G_1}(y)$ are odd.
Let $G_1^{e,f}$ be the graph
obtained from $G_1$ by
adding an edge $e$ between $z$ and $y$ and an edge $f$ between $z$ and $x$.  We claim that
$G_1^{e,f}$ is $2$-connected.
Suppose not and let $(H_1, H_2)$ be a proper $1$-separation of $G_1^{e,f}$, with $V(H_1) \cap V(H_2)=\{w\}$.
Note that $\{x,y,z\}$ cannot be a subset of $V(H_i)$, else $w$ is a cut-vertex of $G$.
Now, if $z \in V(H_i) \setminus \{w\}$, then $\{x,y\} \subseteq V(H_i)$, since $zx$ and $zy$ are edges of $G_1^{e,f}$, a contradiction.
Hence $w=z$.
By symmetry we may assume $x \in V(H_1)$ and $y \in V(H_2)$.  Thus, $H_1 \setminus f$ and $H_2 \setminus e$ induce
$2$-separations in $G$, with boundary vertices $\{x,z\}$ and $\{y,z\}$ respectively.
Each of these $2$-separations is odd since $\deg_{H_1 \setminus f}(x)=\deg_{G_1}(x)$ and
$\deg_{H_2 \setminus e}(y)=\deg_{G_1}(y)$.  By Lemma~\ref{odd2sep}, $H_1 \setminus f$ and $H_2 \setminus e$ are
each just a single edge.  But now, $G -z$ is a subgraph of $G_2$, and is hence balanced as a signed graph. This contradicts Proposition~\ref{almostbipartite}.  Thus, $G_1^{e,f}$ is $2$-connected as claimed.

 Since $G_2$ is also $2$-connected, there are two internally disjoint paths $P_1$ and $P_2$ in $G_2$ from $\{z\}$ to $\{x,y\}$.
 Let $H:=G_1 \cup E(P_1) \cup E(P_2)$.  Observe that $H$ is $2$-connected since it is a subdivision of
 $G_1^{e,f}$. Evidently, $H$ is Eulerian and contains
an even number of negative edges since $\Sigma \cap E(G_2)=\emptyset$.
Moreover, $P_1 \cup P_2 \neq G_2$ since $G$ has minimum degree $4$ by Lemma~\ref{bermudaconnected}.  Hence $H \neq G$.
We are done since $(H, \Sigma \cap E(H))$ is balanced-cycle decomposable by induction and
$(G \setminus E(H), \Sigma \setminus E(H))$ is balanced-cycle decomposable since it is balanced.

We now consider the second possibility that each of $\deg_{G_1}(x), \deg_{G_1}
(y)$, and $\deg_{G_1}(z)$ is even.  In this case we let $(G_1^\Delta, \Sigma)$ be the signed graph obtained from $(G_1, \Sigma)$ by adding three
positive edges $e=xy, f=yz$, and $g=xz$.  Evidently, $G_1^\Delta$ is Eulerian,
$2$-connected and contains an even number of negative edges.

We claim that $(G_1^\Delta,\Sigma)$ is a minor of
$(G,\Sigma)$.  First observe that $G_2$ contains a cycle $C$ as it is Eulerian.
Recall that $G_2$ contains no parallel edges; hence $C$ is not a $2$-cycle.
Next notice that $G$ has no proper $2$-separation $(H_1,H_2)$ with
$V(H_1)\cap V(H_2)\subseteq V(G_2)$ because $G$ is almost
$3$-connected and $G_2$ has no negative edges.
By Menger's theorem, there are three
vertex-disjoint paths from $\{x,y,z\}$ to $V(C)$.
Now it is easy to obtain $(G_1^\Delta,\Sigma)$ as a minor of
$(G,\Sigma)$; we begin by contracting
edges in those three paths.

By the minimality assumption, $(G_1^\Delta, \Sigma)$ has a balanced-cycle decomposition $\mathcal{C}^\Delta$.  If $\{e,f,g\} \in
\mathcal{C}^\Delta$, then $(G_1, \Sigma)$ is balanced-cycle decomposable.
But $(G_2, \emptyset)$ is also balanced-cycle decomposable since it is
Eulerian and balanced.  Thus, $(G, \Sigma)$ is balanced-cycle decomposable.

If $e \in C_1 \in \mathcal{C}^\Delta$ and $\{f,g\}
\subset C_2 \in \mathcal{C}^\Delta$ for $C_1 \neq C_2$, then it
suffices to find two edge-disjoint $x$-$y$ paths in $G_2$, at least
one of which avoids $z$. Since $G_2$ is $2$-connected, $G -z$ is connected, so there does
exist an $x$-$y$ path $P$ in $G_2$ that avoids $z$.  But now the
second path exists since $x$ and $y$ are the only odd degree vertices
in $G_2 \setminus E(P)$.

By symmetry, the only remaining possibility is if
$e,f$, and $g$ are in different cycles of $\mathcal{C}^\Delta$.  In
this case it suffices to show that there are pairwise edge-disjoint
paths $Q_{x,y}, Q_{y,z}$, and $Q_{x,z}$, where $Q_{i,j}$ is an
$i$-$j$-path in $G_2$ such that $\abs{V(Q_{i,j}) \cap \{x,y,z\}}=2$.
We may assume that $G_2$ has no cycle containing $x$, $y$, and $z$, else we are done.
Since $G_2$ is $2$-connected, $G_2$ has a cycle $C$ containing $y$ and
$z$.
Since $G_2$ has no cycles containing $x$, $y$, and $z$,
there do not exist three vertex-disjoint paths from the neighbours of $x$ in $G_2$ to
$V(C)$.
By Menger's theorem, there is a proper $2$-separation $(H_1, H_2)$ of
$G_2$ such that $x\in V(H_1)\setminus V(H_2)$ and $y,z\in
V(C)\subseteq V(H_2)$.
Since $H_1$
induces a $3$-separation in $G$, we have that $\abs{E(H_1)} \leq 3$.  In
particular, $x$ has degree $2$ in $H_1$ (and hence also in $G_2$).
Since $G_2$ is $2$-connected, we can find two vertex-disjoint paths
$Q_{x,y}$ and $Q_{x,z}$.
Note
that in $G_2 \setminus E(Q_{x,y} \cup Q_{x,z})$, $y$ and $z$ are the
only vertices of odd degree and $x$ is an isolated vertex.  Thus,
$G_2\setminus E(Q_{x,y}\cup Q_{x,z})$ has a path
$Q_{y,z}$ from $y$ to $z$ avoiding $x$.  This completes the proof.
\end{proof}

We are now ready to prove our main theorem.

\begin{THMMAIN}
Every $2$-connected Eulerian loopless odd-$K_4$-minor-free signed graph with an even number of negative edges is balanced-cycle decomposable.
\end{THMMAIN}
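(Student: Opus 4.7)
The plan is to argue by contradiction. Let $(G,\Sigma)$ be a counterexample of minimum $\abs{E(G)}$ and apply the structure theorem (Theorem~\ref{structure}) to $(G,\Sigma)$, showing that each of the four structural alternatives leads to a contradiction.

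Alternative (ii) is immediately ruled out by the $2$-connectedness of $G$. For alternative (i): if $(G,\Sigma)$ is almost bipartite, Proposition~\ref{almostbipartite} yields the required decomposition; if $(G,\Sigma)$ is planar with at most two odd faces, then since a standard double count shows that the number of odd faces of a plane embedding is always even, this number is $0$ or $2$, and Proposition~\ref{trivial} or Proposition~\ref{oddplanar} applies accordingly; and the possibility $(G,\Sigma)\cong \tilde K_3^2$ does not arise, because $\tilde K_3^2$ has three odd edges, contradicting the hypothesis. Alternative (iii) is handled by Lemma~\ref{odd2sep} when the $2$-separation is odd and by Lemma~\ref{even2sep} when it is even, the latter leaving only the possibility that $(G,\Sigma)$ is a Bermuda triangle. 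Alternative (iv) is dispatched by Lemma~\ref{3-sep}.

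The remaining case is that $(G,\Sigma)$ is a Bermuda triangle with beads $B_1, B_2, B_3$, where $B_2, B_3 \cong \tilde K_2^2$. Denote the three boundary vertices by $p_1, p_2, p_3$, with $p_2$ the degree-$4$ interior vertex of the albatross $B_2 \cup B_3$, and write the edges of $B_2$ as $e_2$ (odd) and $f_2$ (even) and those of $B_3$ as $e_3$ (odd) and $f_3$ (even). The third bead $B_1$ is a $2$-connected, loopless, Eulerian, odd-$K_4$-minor-free signed graph with an even number of odd edges and $\abs{E(B_1)} < \abs{E(G)}$, so the minimality of $(G,\Sigma)$ furnishes an even cycle decomposition $\mathcal{D}$ of $B_1$. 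I plan to extend $\mathcal{D}$ to $G$ by splicing in the four edges of $B_2 \cup B_3$. The key point is that $B_2 \cup B_3$ admits exactly two ways of decomposing into two edge-disjoint $p_1$--$p_3$ paths through $p_2$: with both paths even (namely $e_2e_3$ and $f_2f_3$) or with both paths odd (namely $e_2f_3$ and $f_2e_3$). Provided some cycle $C \in \mathcal{D}$ contains both $p_1$ and $p_3$, $C$ splits at $\{p_1, p_3\}$ into two arcs whose parities sum to even (since $C$ is even), and pairing these arcs with the matching-parity paths in $B_2 \cup B_3$ yields two new even cycles which, together with $\mathcal{D} \setminus \{C\}$, form an even cycle decomposition of $G$, contradicting the choice of $(G,\Sigma)$.

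The hardest step will be handling the possibility that no cycle of $\mathcal{D}$ contains both $p_1$ and $p_3$. I expect this to require either a careful modification of $\mathcal{D}$ via symmetric differences of cycles through $p_1$ or through $p_3$ to produce such a cycle, or alternatively a separate structural argument showing that in this remaining configuration $B_1$ must be bipartite---whence $G$ becomes almost bipartite upon deletion of $p_2$ and is even cycle decomposable by Proposition~\ref{almostbipartite}, again contradicting the choice of $(G,\Sigma)$.
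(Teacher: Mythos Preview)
Your reduction to the Bermuda-triangle case is correct and mirrors the paper's opening moves. The gap is the Bermuda-triangle case itself, which you explicitly leave unfinished. Your splicing argument is fine when some cycle of $\mathcal{D}$ meets both $p_1$ and $p_3$, but for the complementary case you only sketch two hopes. Neither is easy to realize: the symmetric difference of two even cycles sharing a vertex is an Eulerian subgraph, not in general a single cycle through both endpoints, so ``modifying $\mathcal{D}$'' does not obviously produce what you need; and there is no visible reason why the failure of one particular decomposition to contain a $p_1$--$p_3$ cycle should force $B_1$ to be bipartite.

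The paper's treatment of this case is substantially heavier than a local fix of $\mathcal{D}$. From the decomposition $\mathcal{C}$ of $B_1$ it takes a shortest chain of cycles $C_1,\dots,C_k$ from $u$ to $v$ and uses minimality to show that these cycles exhaust $E(B_1)$, whence $G$ is $4$-regular. It then replaces every albatross of $(G,\Sigma)$ by a $\tilde K_2^2$ to obtain a signed graph $(G',\Sigma')$ which, by Lemma~\ref{bermudaconnected}, is $3$-connected, and re-applies Theorem~\ref{structure} to $(G',\Sigma')$. Each structural alternative for $G'$ is then ruled out separately; the planar-with-two-odd-faces case in particular needs a careful analysis of the block structure of $G'$ after deleting the outer facial cycle. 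So the missing step in your proposal is not a technicality but most of the work.

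A smaller point: you assert that $B_1$ is $2$-connected, but Lemma~\ref{bead} only says each bead is $2$-connected \emph{or} has two vertices; ruling out the two-vertex possibility uses Lemma~\ref{del2-cycle}, which you have not invoked.
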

\begin{proof}
  Suppose not.
  Let $(G,\Sigma)$ be a counterexample with
$\abs{E(G)}$ minimum.  By Lemma~\ref{del2-cycle}, there are at most two parallel edges
  between every pair of vertices.  If $x$ is a degree-$2$ vertex, we can  
  contract an edge incident with $x$, if necessary by re-signing, to obtain a smaller counterexample.  
  Thus, $G$ has minimum degree at least $4$.

  By Theorem~\ref{structure}, one of the following holds.

  \begin{enumerate}[(i)]
  \item
 $(G, \Sigma)$ is almost balanced or planar with two unbalanced faces.

  \item
    $(G, \Sigma)$ has a $2$-separation $(G_1, G_2)$ where each
    $(G_i,E(G_i)\cap \Sigma)$ is connected and not a signed subgraph of $\tilde{K_{2}^{2}}$.

  \item
    $(G, \Sigma)$ has a $3$-separation $(G_1, G_2)$ where
    $(G_2,E(G_2)\cap \Sigma)$ is balanced, connected, and has at least $4$ edges.
  \end{enumerate}

  By  Propositions~\ref{almostbipartite} and \ref{oddplanar}, (i) is not possible.
 Lemma~\ref{3-sep} implies that (iii) is impossible.

 Thus (ii) holds and let $(G_1, G_2)$ be such a
 $2$-separation.
By Lemma~\ref{odd2sep}, $(G_1, G_2)$ is an even separation.  By
Lemma~\ref{even2sep}, $(G, \Sigma)$ is a Bermuda triangle.  Let $B_1,
B_2, B_3$ be a necklace decomposition of $G$ with $(B_2, E(B_2) \cap
\Sigma) \cong (B_3, E(B_3) \cap \Sigma) \cong \tilde K_2^2$.
If $B_1$ has only two vertices, then $B_1$ is just two edges of the same sign, which contradicts Lemma~\ref{del2-cycle}.
So $B_1$ is $2$-connected.
Since $B_1$ contains an even number of negative edges, by the minimality assumption,
 $B_1$ has a balanced-cycle decomposition~$\mathcal{C}$.

 We first claim that $G$ is $4$-regular.
Let $u$ and $v$ be the vertices in $V(B_1) \cap (V(B_2) \cup V(B_3))$.
 Choose a shortest chain of balanced cycles $C_1$, $C_2$, $\ldots$,
$C_k$  ($k\ge 1$)
in $\mathcal C$ such that
$u\in V(C_1)$, $v\in V(C_k)$
and
$V(C_i)\cap V(C_{i+1})\neq \emptyset$ for all $i\in\{1,2,\ldots,k-1\}$.
Since this chain is shortest, $V(C_i)\cap V(C_j)=\emptyset$ whenever
$j>i+1$.
Let $W=E(B_1)-\bigcup_{i=1}^k E(C_i)$.
If $W$ is nonempty, then
$\mathcal C-\{C_1,C_2,\ldots,C_k\}$ is a balanced-cycle decomposition of
$(B_1-\bigcup_{i=1}^k E(C_i), W\cap \Sigma)$.
Moreover by the minimality assumption,
the signed graph $(G_0,\Sigma_0)=(G-W, (E(G)-W)\cap \Sigma)$ must have
a balanced-cycle decomposition $\mathcal D$.
Then $(\mathcal C-\{C_1,C_2,\ldots,C_k\})\cup \mathcal D$ is a balanced-cycle decomposition of $(G,\Sigma)$, which is a contradiction. Therefore $W$ is empty and hence $B_1 = \bigcup_{i=1}^k
C_i$.  Consequently, every vertex of $G$ has degree at most $4$ (and
hence exactly $4$).
Since $\abs{V(B_1)}>2$, we have $k>1$ and $u$ is not adjacent to $v$ in $G$.

Let $(G', \Sigma')$ be the signed graph obtained from $(G, \Sigma)$ by replacing each albatross with a $\tilde K_2^2$.  Since all albatrosses are vertex-disjoint by Lemma~\ref{albatrosses}, $(G', \Sigma')$ is well-defined.  By applying Theorem~\ref{structure} again we conclude that one of the following holds.
   \begin{enumerate}
  \item
 $(G', \Sigma')$ is almost balanced.

 \item
 $(G', \Sigma')$ is planar with two unbalanced faces.

  \item
    $(G', \Sigma')$ has a $2$-separation $(G_1', G_2')$ where each
    $(G_i',E(G_i')\cap \Sigma')$ is connected and not a signed subgraph of $\tilde{K_{2}^{2}}$.

  \item
    $(G', \Sigma')$ has a $3$-separation $(G_1', G_2')$ where
    $(G_2',E(G_2')\cap \Sigma')$ is balanced, connected, and has at least $4$ edges.
  \end{enumerate}

By Lemma~\ref{bermudaconnected}, $(G, \Sigma)$ is almost $3$-connected,
and hence $(G', \Sigma')$ is $3$-connected. Let $x,y,z$ be the vertices of an arbitrary albatross $A$ in $(G, \Sigma)$, where $\deg_A(y)=4$.  Note that $x$ and $z$ are not adjacent in $G$  
since $(G, \Sigma)$ is almost $3$-connected.
Therefore, since $(G, \Sigma)$ does not contain any
parallel edges of the same sign, $(G', \Sigma')$ also does not contain any
parallel edges of the same sign.  It follows that (3) is impossible.

We next exclude possibility (1).  By re-signing in $G'$, we may assume that
there exists a vertex $t \in V(G')$ such that all edges of $G'$ are
positive except possibly those incident with $t$.  Since all albatrosses of $(G, \Sigma)$ are vertex-disjoint, it follows that $B_2 \cup B_3$ is the only albatross of $(G, \Sigma)$, and that $t \in \{u,v\}$. Thus, $(G', \Sigma')$ is obtained from $(G, \Sigma)$ by replacing $B_2 \cup B_3$ with a $\tilde K_2^2$ between $u$ and $v$.  Let $w$ be the vertex of $B_2 \cup B_3$ not in $\{u,v\}$.  By performing the same re-signings in $G$ as we performed in $G'$ we may assume that all edges in $G$ are positive except possibly those incident with $t$ or $w$.

As $G$ is $4$-regular and contains an even number of
negative edges, exactly $1$ or $3$ edges incident with $t$ are negative.  By
re-signing at $t$ in $G$, we may assume that exactly one edge incident with $t$ is
negative. This negative edge is also necessarily incident with $w$.  Therefore, $G-w$ is balanced as a signed graph, which contradicts Proposition~\ref{almostbipartite}.

We next handle possibility (4).  Let $(G_1', G_2')$ be such a
$3$-separation.  Since $(G_2', E(G_2') \cap \Sigma')$ is balanced, it
evidently cannot contain any $\tilde K_2^2$ subgraphs.  Thus, by
putting back the albatrosses, we obtain a $3$-separation $(G_1, G_2)$ of $(G,
\Sigma)$ where $(G_2,E(G_2)\cap \Sigma)$ is balanced, connected, and
has at least $4$ edges.
This is impossible by Lemma~\ref{3-sep}.

We finish by ruling out possibility (2). 
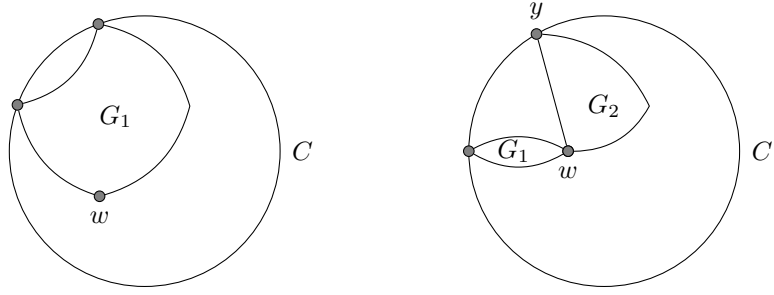
\begin{figure}
\begin{tikzpicture}[scale=.60]
\tikzstyle{v}=[circle,draw,fill=black!50,inner
  sep=0pt,minimum width=4pt,solid,thin]
\draw  (0:0)  circle (3);
\node at (130:1) {$G_1$};
\node at (0:3.5) {$C$};
\node [v] (v1) at (110:3) {};
\node [v] (v2) at (160:3) {};
\draw [bend left](v1) to (v2);
\draw [bend right] (v2)to(-1,-1)node[v,label=below:$w$] {}to (1,1) to (v1);
\end{tikzpicture}
\hspace{5em}%
\begin{tikzpicture}[scale=.60]
\tikzstyle{v}=[circle,draw,fill=black!50,inner
  sep=0pt,minimum width=4pt,solid,thin]
  \node[v] at (180:.8) [label=below: $w$]{};
\draw  (0:0)  circle (3);
\draw[bend left] (180:3) node[v] (v2) {}to(180:.8) node[v] (v1) {};
\draw[bend right] (v2)to(v1);
\node  at (0,1) {$G_2$};
\node at (-2,0) {$G_1$};
\node at (0:3.5) {$C$};
\node [v] (v3) at (120:3)[label=above: $y$] {};
\draw (v1) to (v3)[bend left] to (1,1) to (v1);
\end{tikzpicture}
\caption{On the left, $G_1\not \cong \tilde K_2^2$ and shares at least two vertices
with $C$. On the right, $G_1\cong \tilde K_2^2$ and shares at least one vertex with $C$.}
\label{fig:planar}
\end{figure}
In this case, we claim $(G', \Sigma')$ has a balanced Eulerian signed subgraph $(H', \Sigma'\cap E(H'))$ such that
$E(H')\not = \emptyset$ and $G'\setminus E(H')$ is $2$-connected after removing isolated vertices.
We can then lift $(H', \Sigma' \cap E(H'))$ to a balanced Eulerian signed subgraph $(H, \Sigma\cap E(H))$ of $(G, \Sigma)$ as follows.  Suppose that an albatross $A$ in $(G, \Sigma)$ has been replaced with $K=\tilde K_2^2$ in $(G', \Sigma')$.  Note that $H'$ cannot use both edges of $K$, since $(H', \Sigma' \cap E(H')$ is balanced.  If $H'$ uses a positive edge $e$ of $K$, we replace $e$ by a balanced path in $A$.  If $H'$ uses a negative edge $e$ of $K$, we replace $e$ by an unbalanced path in $A$.  If $H'$ uses no edges of $K$, then $H$ uses no edge of $A$.  

Note that $G \setminus E(H)$ is $2$-connected after removing isolating vertices, since $G' \setminus E(H')$ is $2$-connected after
removing isolated vertices. By Lemma~\ref{bipartite}, $H$ (and hence $G \setminus E(H)$) has
an even number of negative edges. We are then finished since $(G \setminus E(H), \Sigma \setminus E(H))$
is balanced-cycle decomposable by induction.  We now proceed to show that such an $H'$ exists.

We again avoid using Theorem~\ref{seymour} to keep
our proof self-contained. Consider a fixed planar embedding of $(G', \Sigma')$ with at
most two unbalanced faces. Note that each $\tilde{K}_2^2$ must bound a face of $(G', \Sigma')$,
else its two vertices would form a vertex-cut of size two. Since $(G', \Sigma')$ contains
no parallel edges of the same sign, it follows that $G'$ contains at most two pairs
of parallel edges.

Recall that $u$ and $v$ are the vertices of a contracted albatross in $(G', \Sigma')$.
Let $F$ and $F'$ be the two faces adjacent to the unbalanced face given by $u$ and $v$. We may assume
$F$ is the outer face and balanced. Let $C$ be the boundary cycle of $F$. Let $G''=G'\setminus E(C)$,
and $\{G_1,...,G_k\}$ be its set of blocks. We may assume $k\ge 2$, else let $H'=C$.
Note that each $G_i$ is an Eulerian plane subgraph of $G'$, and is $2$-connected unless it
is a $\tilde K_2^2$. We may further assume at least one $G_i$, say $G_1$, is not balanced
else let $H'=G''$.

Since each $(G_i, \Sigma' \cap E(G_i))$ inherits all of its finite faces from $(G', \Sigma')$, and $C$
contains an edge from a finite unbalanced face of $(G', \Sigma')$, there is at most one finite unbalanced face
left in $(G'', \Sigma' \cap E(G''))$. Therefore, for $i\geq 2$, every finite face of $(G_i, \Sigma' \cap E(G_i))$ is balanced and
hence $(G_i, \Sigma' \cap E(G_i))$ is balanced. We may assume for $i\geq 2$, $(G_i, \Sigma' \cap E(G_i))$ is a balanced cycle of length at
least three, else let $H'=G_i\setminus E(C_i)$ where $C_i$ is the boundary cycle of the outer face of $G_i$.

Since $G'$ is $3$-connected, every face of $G'$ is bounded by a
non-separating chordless cycle.
Moreover $G_1$ has an unbalanced finite face and therefore no edge of $C$ has
a parallel edge except the edge joining $u$ and  $v$.
As $C$ is non-separating, $G''$ is connected, so its block graph $T$ is a tree. Let $G_i$ be a leaf in $T$
for some $i\not =1$. Let $x$ be the cut vertex of $G''$ belonging to $G_i$.
Since $G_i$ is a cycle, every vertex in $V(G_i) \setminus \{x\}$ must belong to $C$ because $G'$ is $4$-regular.
Since $C$ is chordless, every edge of $G_i - x$ must have endpoints $u$ and $v$.  In particular, $G_i$ must be
a triangle and $G_i$ is the only leaf in $\{G_2, \dots, G_k\}$.  Thus, $T$ is a path with leaves $G_1$ and $G_i$.  We re-label so that the $G_j$'s are labelled
according to their order in $T$.  Let $w$ be the cut-vertex of $G''$ belonging to $G_1$.

Suppose $(G_1, \Sigma' \cap E(G_1)) \not \cong \tilde K_2^2$. Since $G_1$ has at least three vertices and $G'$ is
$3$-connected, $G_1$ must share at least two vertices with $C$, otherwise $(V(G_1)\cap V(C))\cup\{w\}$
is a vertex-cut in $G'$ of size at most two. Therefore $G_1\cup C$ is $2$-connected,
see Figure~\ref{fig:planar}. We are done since we may take $H'=G_2\cup...\cup G_k$.

So we may assume $(G_1, \Sigma' \cap E(G_1)) \cong \tilde K_2^2$.  Since the two cut-vertices of $G''$ in $G_2$ cannot be a
vertex-cut in $G'$, $G_2$ shares at least one vertex $y$ with $C$.  Also, $y \neq w$,
since $G'$ is $4$-regular. Hence, $G_1\cup G_2\cup C$ has an ear decomposition starting from $G_2$ (see Figure~\ref{fig:planar}),
and is thus $2$-connected.  We are finished by letting
$H'=G_3\cup...\cup G_k$, unless $k=2$.  If $k=2$, then $(G_1, \Sigma' \cap E(G_1)) \cong \tilde K_2^2$ and $G_2$ and $C$ are triangles.
Hence $G'$ is obtained from $K_4$ by doubling a pair of independent edges.
Since each set of parallel edges is a $\tilde K_2^2$, there is a balanced $4$-cycle which passes through both sets of parallel edges.  Let
$H'$ be such a balanced $4$-cycle.  We are done since $G'\setminus E(H')$ is just a $4$-cycle and evidently $2$-connected.
\end{proof}

\textbf{Acknowledgements.} The authors would like to thank Cheolwon Heo for permission to include the proof of Theorem~\ref{counterexample}.
The authors would also like to thank anonymous referees for their valuable suggestions.


\begin{thebibliography}{10}

\bibitem{AH89}
K.~Appel and W.~Haken.
\newblock {\em Every planar map is four colorable}, volume~98 of {\em
  Contemporary Mathematics}.
\newblock American Mathematical Society, Providence, RI, 1989.
\newblock With the collaboration of J. Koch.

\bibitem{conlon}
J.~G. Conlon.
\newblock Even cycles in graphs.
\newblock {\em J. Graph Theory}, 45(3):163--223, 2004.

\bibitem{fleischner}
H.~Fleischner.
\newblock ({S}ome of) the many uses of {E}ulerian graphs in graph theory (plus
  some applications).
\newblock {\em Discrete Math.}, 230(1-3):23--43, 2001.
\newblock Paul Catlin memorial collection (Kalamazoo, MI, 1996).

\bibitem{gerards}
A.~M.~H. Gerards.
\newblock {\em Graphs and polyhedra. {B}inary spaces and cutting planes},
  volume~73 of {\em CWI Tract}.
\newblock Stichting Mathematisch Centrum Centrum voor Wiskunde en Informatica,
  Amsterdam, 1990.
\newblock http://oai.cwi.nl/oai/asset/12714/12714A.pdf.

\bibitem{GP81}
M.~Gr{\"o}tschel and W.~R. Pulleyblank.
\newblock Weakly bipartite graphs and the max-cut problem.
\newblock {\em Oper. Res. Lett.}, 1(1):23--27, 1981/82.

\bibitem{Guenin01}
B.~Guenin.
\newblock A characterization of weakly bipartite graphs.
\newblock {\em J. Combin. Theory Ser. B}, 83(1):112--168, 2001.

\bibitem{jackson}
B.~Jackson.
\newblock On circuit covers, circuit decompositions and {E}uler tours of
  graphs.
\newblock In {\em Surveys in combinatorics, 1993 ({K}eele)}, volume 187 of {\em
  London Math. Soc. Lecture Note Ser.}, pages 191--210. Cambridge Univ. Press,
  Cambridge, 1993.

\bibitem{NRS13}
R.~Naserasr, E.~Rollov\'a, and E.~Sopena.
\newblock Homomorphisms of planar signed graphs to signed projective cubes.
\newblock {\em Discrete Math. Theor. Comput. Sci.}, 15(3):1--11, 2013.

\bibitem{RSST96}
N.~Robertson, D.~P. Sanders, P.~Seymour, and R.~Thomas.
\newblock A new proof of the four-colour theorem.
\newblock {\em Electron. Res. Announc. Amer. Math. Soc.}, 2(1):17--25, 1996.

\bibitem{regular}
P.~D. Seymour.
\newblock Decomposition of regular matroids.
\newblock {\em J. Combin. Theory Ser. B}, 28(3):305--359, 1980.

\bibitem{planar}
P.~D. Seymour.
\newblock Even circuits in planar graphs.
\newblock {\em J. Combin. Theory Ser. B}, 31(3):327--338, 1981.

\bibitem{TT1986}
F.~T. Tseng and K.~Truemper.
\newblock A decomposition of the matroids with the max-flow min-cut property.
\newblock {\em Discrete Appl. Math.}, 15(2-3):329--364, 1986.
\newblock Applications of combinatorial methods in mathematical programming
  (Gainesville, Fla., 1985).

\bibitem{nofano}
F.~T. Tseng and K.~Truemper.
\newblock Addendum: ``{A} decomposition of the matroids with the max-flow
  min-cut property'' [{D}iscrete {A}ppl.\ {M}ath.\ {\bf 15} (1986), no.\ 2-3,
  329--364; {MR}0865011 (88b:05046)].
\newblock {\em Discrete Appl. Math.}, 20(1):87--88, 1988.

\bibitem{zaslavsky82}
T.~Zaslavsky.
\newblock Signed graphs.
\newblock {\em Discrete Applied Mathematics}, 4(1):47 -- 74, 1982.

\bibitem{nok5}
C.~Q. Zhang.
\newblock On even circuit decompositions of {E}ulerian graphs.
\newblock {\em J. Graph Theory}, 18(1):51--57, 1994.

\end{thebibliography}
\end{document}